 \newcommand{\beqn}{\begin{eqnarray}}
 \newcommand{\eeqn}{\end{eqnarray}}
 \newcommand{\be}{\begin{equation}}
 \newcommand{\ee}{\end{equation}}
 \newcommand{\ba}{\begin{array}}
 \newcommand{\ea}{\end{array}}
\newcommand{\br}{\begin{remark}}
 \newcommand{\er}{\end{remark}}
\newcommand{\bc}{\begin{cor}}
 \newcommand{\ec}{\end{cor}}
 \newcommand{\pa}{\partial}
 \newcommand{\re}{\ref}
 \newcommand{\ci}{\cite}
 \newcommand{\ds}{\displaystyle}
 \newcommand{\la}{\label}
 \newcommand{\rIm}{{\rm Im\5}}
 \newcommand{\supp}{{\rm supp~}}
\newcommand{\fr}{\frac}
\newcommand{\ov}{\overline}
\newcommand{\ti}{\tilde}
\newcommand{\cF}{{\cal F}}
\newcommand{\cH}{{\cal H}}
\newcommand{\cL}{{\cal L}}
\newcommand{\cN}{{\cal N}}
\newcommand{\cM}{{\cal M}}
\newcommand{\cO}{{\cal O}}
\newcommand{\cP}{{\cal P}}
\newcommand{\cR}{{\cal R}}
\newcommand{\cW}{{\cal W}}
\newcommand{\cU}{{\cal U}}
\newcommand{\cV}{{\cal V}}
\newcommand{\ve}{\varepsilon}
\newcommand{\De}{\Delta}
\newcommand{\de}{\delta}
\newcommand{\al}{\alpha}
\newcommand{\Ga}{\Gamma}
\newcommand{\si}{\sigma}
\newcommand{\om}{\omega}
\newcommand{\na}{\nabla}
\newcommand{\Si}{\Sigma}
\newcommand{\lam}{\lambda}
\newcommand{\ka}{\kappa}
\newcommand{\5}{{\hspace{0.5mm}}}
\newcommand{\R}{\mathbb{R}}
\newcommand{\C}{\mathbb{C}}
\renewcommand{\theequation}{\thesection.\arabic{equation}}
\renewcommand{\thesection}{\arabic{section}}
\renewcommand{\thesubsection}{\arabic{section}.\arabic{subsection}}
\newtheorem{theorem}{Theorem}[section]
\renewcommand{\thetheorem}{\arabic{section}.\arabic{theorem}}
\newtheorem{defin}[theorem]{Definition}
\newtheorem{lemma}[theorem]{Lemma}
\newtheorem{remark}[theorem]{Remark}
\newtheorem{cor}[theorem]{Corollary}
\newtheorem{pro}[theorem]{Proposition}
\newcommand{\bp}{\begin{pro}}
\newcommand{\ep}{\end{pro}}
\begin{document}
\begin{titlepage}
\bigskip\bigskip\bigskip

\begin{center}
{\Large\bf
Weighted Energy Decay for \bigskip\\
3D Klein-Gordon Equation}
\vspace{1cm}
\\
{\large A.~I.~Komech}
\footnote{
Supported partly
 by the Alexander von Humboldt
Research Award.
}$^{,\5 2}$
\\
{\it Fakult\"at f\"ur Mathematik, Universit\"at Wien\\
and Institute for Information Transmission Problems RAS}\\
 e-mail:~alexander.komech@univie.ac.at
\medskip\\

{\large E.~A.~Kopylova}
\footnote{Supported partly by  the grants of FWF,
DFG and RFBR.}
\\
{\it Institute for Information Transmission Problems RAS\\
B.Karetnyi 19, Moscow 101447,GSP-4, Russia}\\
e-mail:~elena.kopylova@univie.ac.at
\end{center}

\date{}

\vspace{0.5cm}
\begin{abstract}
\noindent
We obtain a dispersive long-time decay in weighted energy
norms for solutions of the 3D   Klein-Gordon equation with generic
potential.
The decay extends the results obtained by Jensen and
Kato for the 3D Schr\"odinger equation.
For the proof we modify the spectral approach of  Jensen and
Kato to make it applicable to relativistic equations.

\smallskip

\noindent
{\em Keywords}: dispersion, Klein-Gordon equation,
relativistic equations, resolvent, spectral representation,
weighted spaces, continuous spectrum, Born series, convolution,
long-time asymptotics, asymptotic completeness.
\smallskip

\noindent
{\em 2000 Mathematics Subject Classification}: 35L10, 34L25, 47A40, 81U05
\end{abstract}

\end{titlepage}

\setcounter{equation}{0}
\section{Introduction}
In this paper, we establish a dispersive long time decay
for the solutions to 3D Klein-Gordon equation
\be\la{KGE0}
  \ddot\psi(x,t)=\Delta\psi(x,t)-m^2\psi(x,t)+V(x)\psi(x,t),
\quad x\in\R^3,
  \quad m> 0
\ee
in weighted energy norms. In vectorial form,
equation (\ref{KGE0}) reads
\be\la{KGEr} i\dot \Psi(t)=\cH \Psi(t)
\ee
where
\be\la{H}
 \Psi(t)=\left(\begin{array}{c}
  \psi(t)
  \\
  \dot\psi(t)
  \end{array}\right),
~~~~~~~
\cH =\left(\begin{array}{cc}
  0               &   i
  \\
  i(\Delta-m^2+V)   &   0
  \end{array}\right)
\ee
For $s,\si\in\R$,
let us denote by $H^s_\si=H^s_\si (\R^3)$
the weighted Sobolev spaces introduced by Agmon, \ci{A},
with the finite norms
$$
  \Vert\psi\Vert_{H^s_\si}=\Vert\langle x
\rangle^\si\langle\na\rangle^s\psi\Vert_{L^2}<\infty,
\quad\quad \langle x\rangle=(1+|x|^2)^{1/2}
$$
We assume that $V(x)$ is a real function, and
\be \label{V}
    |V(x)|+|\na V(x)|\le C\langle x\rangle^{-\beta},~~~~~x\in\R^3
\ee
for some $\beta>3$. Then the multiplication by $V(x)$
is bounded operator $H^1_s \to H^{1}_{s+\beta}$ for any $s\in\R$.

We restrict ourselves to the ``regular case'' in the terminology of \cite{jeka} 
(or ``nonsingular case'' in  \cite{M}) which holds for {\it generic potentials}.
Equivalently, the truncated resolvent of the Schr\"odinger operator $H=-\De+V(x)$ 
is bounded at the end point $\lam=0$  of the continuous spectrum 
by \cite[Theorem 7.2]{M}. In other words, 
the point $\lam=0$ is neither eigenvalue nor resonance for the operator $H$.

\begin{defin}\la{def2}
  $\cF _{\si}$ is the  complex  Hilbert space $H^1_{\si}\oplus H^0_{\si}$
of vector-functions $\Psi =(\psi ,\pi )$
with the norm
  \be\la{Falfa}
   \Vert \,\Psi\Vert_{\cF _{\si}}=
   \Vert\psi\Vert_{H^1_\si} +\Vert\pi \Vert_{H^0_\si}<\infty
  \ee
\end{defin}

Our main result is the following long time decay of the solutions
to (\re{KGEr}): in the ``regular case'',
\be\label{full}
 \Vert\cP_c\Psi(t)\Vert_
 {\cF_{-\si}}=\cO (|t|^{-3/2}),\quad t\to\pm\infty
\ee
for initial data $\Psi_0=\Psi(0)\in\cF _\si$ with $\sigma>5/2$
where  $\cP_c$ is a Riesz projector onto the continuous spectrum
of the operator $\cH$.
The decay is desirable for the study of asymptotic stability and scattering
for the solutions to nonlinear hyperbolic equations. The study has been started in 90'
for nonlinear Schr\"odinger equation, \ci{BP1995,PW92,PW94,SW90,SW92},
and continued last decade \ci{BS2003,Cu2001,KKop2006}.
The study has been extended to the Klein-Gordon equation in \ci{IKV,SW99}.
Further extension need more information on the decay for the corresponding linearized
equations that stipulated our investigation.


Let us comment on previous results in this direction. Local energy decay
has been established first in  the scattering theory for linear Schr\"odinger equation 
developed since 50' by Birman, Kato, Simon, and others.

For free 3D Klein-Gordon equation, the decay $\sim t^{-3/2}$ in $L^\infty$ norm
has been proved first by Morawetz and Strauss \ci[Appendix B]{MS72}.
For wave and Klein-Gordon equations with magnetic potential,
the decay $\sim t^{-3/2}$  has been established primarily by Vainberg \ci{V1976} in local 
energy norms for initial data with compact support.
The results were extended to general hyperbolic partial differential
equations by Vainberg in \ci{V1989}.
The decay in the $L^p$ norms for wave and Klein-Gordon equations has been obtained in
\ci{Brenner79, Brenner85, Delort2001,Klainerman93, MSW80, Wed03, Y95}.

However, applications to asymptotic stability of  solutions to the nonlinear equations 
also require an exact characterization of the decay for the corresponding
linearized equations in weighted norms (see e.g.\ci{BP1995,BS2003,Cu2001, SW99}).

The decay of type (\re{full}) in weighted norms has been established first
by Jensen and Kato \ci{jeka} for the Schr\"odinger equation in the dimension $n=3$. 
The result has been extended to all other dimensions by Jensen and
Nenciu \ci{je1,je2,JN2001},and to more general PDEs of the Schr\"odinger type 
by Murata \ci{M}. The survey of the results can be found in \ci{Schlag}.

For free wave equations corresponding to $m=0$, some estimates in weighted $L^p$-norms 
have been established in \ci{AGK}. The Strichartz weighted estimates for
the perturbed Klein-Gordon equations were established in \ci{KL2007}.

For the free 3D Klein-Gordon equation, the decay  (\re{full}) in the weighted energy 
norms has been proved first in \ci[Lemma 18.2]{IKV}. However, for the perturbed 
relativistic equations the decay was not proved until now.
The problem was that the Jensen-Kato approach is not applicable directly
to the relativistic equations. The difference reflects distinct character
of wave propagation in the relativistic and nonrelativistic equations (see below).

Let us comment on the disctinction and our techniques. The Jensen-Kato approach
\ci{jeka} relies on the spectral Fourier-Laplace representation
\be\la{FL}
P_c\Psi(t)=\fr 1{2\pi i}\int\limits_0^\infty
e^{-i\om t}\Big[R (\om+i0)-R (\om-i0)\Big] \Psi_0d\om,~~~~~~t\in\R
\ee
where $R (\om)$ is the resolvent  of the Schr\"odinger operator $H=-\Delta+V$, 
and $P_c$ is the corresponding projector onto the continuous spectrum of $H$.
Integration by parts implies the time decay of type (\re{full})since the resolvent 
$R (\om)$ is sufficiently smooth and its derivatives $\pa^k_\om R (\om)$ have a good 
decay at $|\om|\to\infty$ for large $k$ in the weighted norms. On the other hand,
in the case of the Klein-Gordon, the derivatives do not decay though the smoothness 
of the resolvent also follows from the results \ci{jeka}.

Let us illustrate this difference in the case of the corresponding
free 3D equations:\\
i) the resolvent of the free Schr\"odinger equation is the integral operator with 
the kernel
$$
R_{\rm S}(\om,x-y)=\fr{e^{i\sqrt{\om}|x-y|}}{4\pi|x-y|}
$$
ii) the resolvent of the free Klein-Gordon equation
is the integral operator with the matrix kernel
\be\la{fKGi}
R_{\rm KG}(\om,x-y)=
\left(\begin{array}{cc}
  0         &   0
  \\
  -i\de(x-y) &  0
  \end{array}\right)
+
\fr {e^{i\sqrt{\om^2-m^2}|x-y|}} {4\pi|x-y|}
\left(\begin{array}{cc}
  \om         &   i
  \\
  -i\om^2 &  \om
  \end{array}\right)
\ee
and the region of integration in the corresponding formula (\re{FL}) is changed to 
$|\om|>m$. Leading singularities of the both resolvents are almost identical:
$\sqrt{\om}$ at $\om=0$ for $R_{\rm S}$, and $\sqrt{\om\mp m}$ at $\om=\pm m$ for 
$R_{\rm KG}$. Hence, the contribution of law frequencies into the integral (\re{FL}) 
decays like $t^{-3/2}$ both for the Schr\"odinger and Klein-Gordon case.

Now let us discuss the contribution  of high frequencies into the integral (\re{FL}). 
For the Schr\"odinger case, the contribution decays like $\sim t^{-N}$ with any $N>0$. 
This follows by partial integration since the derivatives
$\pa_\om^k R_{\rm S}(\om,x-y)$ decay like $|\om|^{-k/2}$ as $\om\to \infty$.

On the other hand, the kernel  $R_{\rm KG}(\om,x-y)$ does not decay for large $|\om|$,
and differentiation in $\om$ does not improve the decay (cf. the bounds (\re{bR0}) 
and (\re{bR})). Hence,  for the Klein-Gordon equation the integration by
parts does not provide the long time decay.

This difference is not only technical. It reflects the fact that the multiplication 
by $t^N$, with large $N$, improves the smoothness of the solutions to the Schr\"odinger 
equation in contrast to the Klein-Gordon equation. This corresponds to distinct
character of the wave propagation in the relativistic and nonrelativistic equations:
\\
i) for a  solution $\psi(x,t)$ to the Schr\"odinger equation,  main singularity is 
concentrated at $t=0$ and disappears at infinity for $t\ne 0$ due to
infinite speed of propagation.
\\
ii) for a solution  $\psi(x,t)$ to the Klein-Gordon equation, the singularities move
with bounded speed, thus they are present forever in the space.

Thus, the proof of the decay for the high energy component of the solution
requires novel robust ideas. This problem is resolved at present paper with
a modification of the Jensen and Kato technique. Our modification relies 
on a version of the  Huygens principle, the  Born series and the  convolution.
Namely, the resolvent $\cR(\om)$ of the operator $\cH$ admits the finite Born expansion
\be\la{idi}
  \cR (\om)
  = \cR _0(\omega)-\cR _0(\omega)\cV \cR _0(\omega)
  +\cR _0(\omega)\cV \cR _0(\omega)\cV \cR (\omega)
\ee
where $\cR_0(\om)$ stands for the free resolvent with the integral kernel (\re{fKGi})
corresponding to $V=0$, and $\cV=\left(
\ba{cc}0&0\\
V&0
\ea
\right)$.
Taking the inverse Fourier-Laplace transform, we obtain the corresponding expansion 
for the dynamical group $\cU(t)$ of the Klein-Gordon equation
(\re{KGEr}),
\be\la{idid}
 \cU(t)= \cU_0(t)+i\int_0^t \cU_0(t-s)\cV \cU_0 (s)ds
  -iF^{-1}_{\om\to t}\Big[
\cR _0(\omega)\cV \cR _0(\omega)\cV \cR (\omega)\Big]
\ee
where  $\cU_0(t)$ stands for the free dynamical group corresponding to $V=0$. 
The expansion corresponds to iterative procedure in solving  the perturbed Klein-Gordon 
equation (\re{KGEr}).
Further we consider separately each term in the right hand side of (\re{idid}):

I. As we noted above, for the first term $\cU_0(t)$ we cannot deduce the time 
decay (\re{full}) from the spectral representation of type (\re{FL}). On the other hand, 
the decay has been established in \ci[Lemma 18.2]{IKV} using an analog of the strong 
Huygens principle extending Vainberg's trick \ci{V1989} from the wave
to the Klein-Gordon equation.

II. For the second term we also  cannot deduce the time decay from the spectral
representation. However, the  decay follows by standard estimates for the convolution 
using the decay of the first term and the condition (\ref{V}) on the  potential.

III. Finally, the time decay for the last term  follows from  the spectral
representation by the Jensen-Kato technique  since
$\Vert\cV\cR _0(\omega)\cV\Vert\sim|\om|^{-2}$ as $|\om|\to\infty$
that follows from the (expected) lucky structure of the matrix
$\cV\cR _0(\omega)\cV$ (see (\re{ident})).


Our paper is organized as follows.
In Section \ref{fKG} we obtain the time decay for the solution to the free Klein-Gordon 
equation and state the spectral properties of the free resolvent which follow from the 
corresponding known properties of the free Schr\"odinger resolvent.
In Section \ref{pKG} we obtain spectral properties of the perturbed resolvent
and prove the decay \eqref{full}.
In Section 4 we apply the obtained decay to the asymptotic completeness.

In Appendix A we prove a revised version of Agmon-Jensen-Kato high energy decay
for  the free Schr\"odinger resolvent which we use in Section 2.
Finally, in Appendix B we give a streamlined proof of the Jensen-Kato lemma
on the decay of the Fourier integrals which we need in Section 3.

The asymptotic decay (\re{full}) is proved in \cite{KKop2009} for 1D  Klein-Gordon
equation. For the 3D wave equation corresponding to $m=0$, the weighted energy 
decay of type (\re{full}) was established in \ci{Kop09}.

\setcounter{equation}{0}
\section{Free Klein-Gordon equation}\la{fKG}
\subsection{Time decay}
First, we prove the time decay
 (\ref{full}) for
the free Klein-Gordon equation:
\be\la{KGE}
  \ddot\psi(x,t)=\Delta\psi(x,t)-m^2\psi(x,t),\quad x\in\R^3,\quad t\in\R
\ee
In vectorial form  equation (\ref{KGE}) reads
\be\la{KGE1}
i\dot \Psi(t)=\cH _0\Psi(t)
\ee
where
\be\la{H_0}
\Psi(t)=\left(\begin{array}{c}
  \psi(t)
  \\
  \dot\psi(t)
  \end{array}\right),
~~~~~~~~
\cH _0=\left(\begin{array}{cc}
  0               &   i
  \\
  i(\Delta-m^2)   &   0
  \end{array}\right)
\ee
Denote by $\cU _{0}(t)$ the dynamical  group of the equation
(\ref{KGE1}).
It is strongly continuous group
in the Hilbert space $\cF_0$.
The group is unitary after a suitable modification of the norm
that follows from the energy conservation.

\begin{pro} \la{TD} (cf.\cite{IKV}, Lemma 18.2)
  Let  $\si>3/2$. Then for $\Psi_0\in\cF_\si$
  \be\la{lins}
  \Vert\cU _0(t)\Psi_0\Vert_{\cF _{-\si}}\le\fr{C\Vert \Psi_0\Vert_{\cF _{\si}}}
  {(1+|t|)^{3/2}},\quad t\in\R
  \ee
\end{pro}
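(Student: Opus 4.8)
The plan is to reduce the vectorial estimate \eqref{lins} to a scalar dispersive estimate for the Klein-Gordon propagator and then to invoke a weighted $L^2$ decay bound of the type already known for the free equation. Concretely, writing $\omega(k)=\sqrt{|k|^2+m^2}$ for the dispersion relation, the solution of \eqref{KGE} with data $(\psi_0,\dot\psi_0)$ is
\be
\psi(t)=\cos(t\sqrt{-\Delta+m^2})\,\psi_0+\frac{\sin(t\sqrt{-\Delta+m^2})}{\sqrt{-\Delta+m^2}}\,\dot\psi_0,
\ee
and $\dot\psi(t)$ is obtained by differentiating in $t$. Thus every component of $\cU_0(t)\Psi_0$ is a finite sum of operators of the form $e^{\pm it\sqrt{-\Delta+m^2}}\,a(\sqrt{-\Delta+m^2})$ with $a$ a bounded symbol (namely $1$, $(-\Delta+m^2)^{\pm1/2}$, applied to the appropriate component), so it suffices to prove
\be\la{scalardecay}
\big\Vert\langle x\rangle^{-\si}\,e^{\pm it\sqrt{-\Delta+m^2}}\,a(\sqrt{-\Delta+m^2})\,\langle x\rangle^{-\si}\big\Vert_{L^2\to L^2}\le \frac{C}{(1+|t|)^{3/2}}
\ee
for $\si>3/2$, uniformly for the finitely many relevant symbols $a$, taking care that the map $\dot\psi_0\mapsto(-\Delta+m^2)^{-1/2}\dot\psi_0$ loses no weight (it is bounded on $H^0_\si$ because $(-\Delta+m^2)^{-1/2}$ is an operator with an exponentially localized kernel).

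For small $|t|$, say $|t|\le 1$, the left side of \eqref{scalardecay} is bounded because $e^{\pm it\sqrt{-\Delta+m^2}}$ is unitary on $L^2$ and $\langle x\rangle^{-\si}\langle x\rangle^{-\si}$ is a bounded multiplier after we absorb $a$; so the whole content is the large-$|t|$ regime, where one wants the gain $|t|^{-3/2}$. Here I would follow the strategy indicated in the introduction and in \cite[Lemma 18.2]{IKV}: use a version of the strong Huygens principle for the Klein-Gordon equation (Vainberg's trick, adapted from the wave to the Klein-Gordon case). The point is that for data with, effectively, localized support the singular (non-decaying) part of the propagator is transported to spatial infinity with finite speed and is therefore absent from any fixed weighted region after a finite time, while the smooth remainder genuinely disperses. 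Passing from compactly supported data to data in $\cF_\si$ is done by the usual cutoff-and-sum argument, decomposing $\Psi_0=\sum_j \chi_j\Psi_0$ over a partition of $\R^3$ into unit cubes and using the weight $\langle x\rangle^{\si}$ with $\si>3/2$ to sum the tails (the series $\sum_j \langle j\rangle^{-2\si}$ converges, and the finite-speed-of-propagation localization keeps the interaction between far-apart cubes negligible after the relevant time).

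The main obstacle, and the place where the relativistic case genuinely differs from the Schr\"odinger case, is precisely the large-frequency behavior: the symbol $e^{\pm it\omega(k)}$ with $\omega(k)=\sqrt{|k|^2+m^2}$ behaves like $e^{\pm it|k|}$ for $|k|\to\infty$, so no amount of differentiation in the time or spectral variable converts high frequencies into decay (unlike $e^{it|k|^2}$), and stationary phase in $k$ alone gives only $|t|^{-3/2}$ with no room to trade for weights via integration by parts. So the argument cannot be run by brute-force stationary phase plus partial integration; it must exploit the Huygens-type cancellation, i.e.\ that the contribution of the light cone to a fixed weighted ball is, after finite time, controlled. Once the Huygens/Vainberg reduction is in place the remaining estimates—dispersive decay for the smooth part, and the weighted summation over the cube decomposition—are routine, and the constant $C$ in \eqref{lins} comes out uniform in $t$ after patching the $|t|\le 1$ and $|t|\ge 1$ bounds. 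I would present the $|t|\le 1$ bound first (trivial), then quote the Huygens principle for Klein-Gordon, then do the cutoff/summation, then combine; and I expect the write-up to lean heavily on \cite[Lemma 18.2]{IKV} for the core decay, with the novelty here being only the packaging into the $\cF_{-\si}$ norm used in the rest of the paper.
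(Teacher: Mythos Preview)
Your high-level diagnosis is correct and matches the paper: a pure spectral/stationary-phase argument cannot give \eqref{lins} because of the large-$|k|$ behavior of $e^{\pm it\omega(k)}$, and one must instead exploit finite propagation speed together with the genuine $t^{-3/2}$ decay of the fundamental solution strictly inside the light cone. The reduction to the scalar propagators $e^{\pm it\sqrt{-\Delta+m^2}}$ is fine.

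However, the implementation you sketch differs from the paper's and has a gap. The paper does \emph{not} use a time-independent unit-cube decomposition of $\Psi_0$. Instead it works directly with the explicit matrix kernel (formulas \eqref{KGsol}--\eqref{Gbess}), uses the Bessel-function asymptotics to get the pointwise bound $|\pa_z^\al\cU_0(z,t)|\le C(1+t)^{-3/2}$ for $|z|\le\ve t$, and then performs a \emph{time-dependent} splitting: the data are cut at $|x|\sim\ve t$ and the output is cut at $|x|\sim\ve t$. For any piece with input or output in $\{|x|\gtrsim\ve t\}$, energy conservation plus the weight $\langle x\rangle^{-\si}$ with $\si>3/2$ already gives $t^{-\si}\le t^{-3/2}$; for the remaining ``near--near'' piece one uses the pointwise kernel bound to show the localized operator is Hilbert--Schmidt with norm $\le C(1+t)^{-3/2}$.

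Your cube decomposition, by contrast, runs into a uniformity problem you have not addressed: the estimate for data supported in the unit cube at the origin does not transfer by translation to the cube at $j$, because the output weight $\langle x\rangle^{-\si}$ is centered at $0$, not at $j$. Concretely, for $|j|\sim t$ the sharp front from cube $j$ sits near the origin at time $t$, the output weight gives no gain there, and you have no per-cube bound of size $\langle j\rangle^{-\si}(1+t)^{-3/2}$ to feed into the $\ell^1$ (or Cauchy--Schwarz) sum. The phrase ``finite-speed-of-propagation localization keeps the interaction between far-apart cubes negligible'' does not resolve this; it is precisely the cubes at distance $\sim t$ that are the problem. If you try to repair this you will be forced to group cubes into $|j|\lesssim\ve t$ versus $|j|\gtrsim\ve t$ and treat them separately---at which point you have reinvented the paper's time-dependent splitting, and the cube partition is superfluous.
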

\begin{proof}
{\it Step i)}
It suffices to consider $t > 0$. In this case
the matrix kernel
of the dynamical  group $\cU _{0}(t)$
can be written as
$\cU _{0}(x-y,t)$ where
\begin{equation} \label{KGsol}
  \cU _0(z ,t)=\left( \ba{ll}
  \dot U(z,t)    &              U(z,t)\\
  \ddot U(z,t)   &        \dot U(z,t)
  \ea \right),\quad z\in\R^3
\end{equation}
and
\be\la{Gbess}
 U(z,t)=\frac{\delta (t-|z|)}{4\pi t}-
\fr{m}{4\pi}\fr{\theta(t-|z|)J_{1}
 (m\sqrt{t^{2}-|z|^{2}})}{\sqrt{t^{2}-|z|^{2}}},~~~~t>  0
\ee
where $J_{1}$ is the Bessel function of order 1,
and $\theta$ is the Heavyside function.
Let us fix an arbitrary $\ve\in(0,1)$.
Well known asymptotics of the Bessel function imply that
\be\la{W}
  |\pa_{z}^{\al}\cU _{0}(z,t)| \le C(\ve)(1+t)^{-3/2},\quad
|z|\le\ve t,~~t\ge 1
\ee
for $|\al|\le 1$.\\
\textit{Step ii)}
Now we consider
an arbitrary $t\ge 1$. Let us
split the initial function $\Psi_{0}$ in two terms,
$\Psi_{0}=\Psi_{0,t}^{\prime }+\Psi_{0,t}^{\prime \prime }$ such that
\begin{equation} \label{FFn}
  \Vert \Psi_{0,t}^{\prime}\Vert _{\cF _\si}+\Vert \Psi_{0,t}^{\prime\prime }
  \Vert _{\cF _\si}\le C\Vert \Psi_{0}\Vert _{\cF _\si},\quad t\ge 1
\end{equation}
and
\be\label{F}
  \Psi_{0,t}^{\prime }(x)=0~~\mbox{for}~|x|>\frac{\ve t}{2},
  ~~~~~~~\mbox{and}~~~~~~~~~~
  \Psi_{0,t}^{\prime \prime }(x)=0~~\mbox{for}~|x|<\frac{\ve t}{4}
  \ee
The estimate (\re{lins})
for $\cU _0(t)\Psi_{0,t}^{\prime\prime}$ follows by
energy conservation for the Klein-Gordon equation,
(\ref {F})  and (\ref{FFn}):
\begin{eqnarray} \label{zxc}
  \Vert \cU _0(t)\Psi_{0,t}^{\prime\prime }\Vert _{\cF _{-\si}} &\le &
  \Vert \cU _0(t)\Psi_{0,t}^{\prime\prime }\Vert _{\cF_0}
  \leq C\Vert \Psi_{0,t}^{\prime \prime }\Vert _{\cF_0}
  \nonumber \\
\nonumber \\
  &\leq &\fr
{C_{1}(\ve)\Vert \Psi_{0,t}^{\prime \prime }\Vert _{\cF_\si}}
{(1+t)^{\si}}
  \leq \fr
{C_{2}(\ve)\Vert \Psi_{0}\Vert _{\cF _\si}}
{(1+t)^{3/2}},\quad t\geq 1
\end{eqnarray}
since $\si>3/2$.
\\
\textit{Step iii)}
Next we consider $\cU _0(t)\Psi_{0,t}^{\prime }$.
Now we split the operator $\cU _0(t)$ in two terms:
$$
  \cU _0(t)=(1-\zeta )\cU _0(t)+\zeta \cU _0(t),~~~~~t\ge 1
$$
where $\zeta $ is the operator of multiplication by the function $\zeta ({|x|}/{t})$
such that $\zeta =\zeta (s)\in C_{0}^{\infty }(\R)$,
$\zeta (s)=1$ for $|s|<\ve/4$, $\zeta (s)=0$ for $ |s|>\ve/2$.
Obviously,
for any $\al$ we have
$$
 |\partial _{x}^{\alpha }\zeta ({|x|}/{t})|\leq C<\infty,~~~~~~t\ge 1
$$
Furthermore, $1-\zeta ({|x|}/{t})=0$ for $|x|<\ve t/4$, hence
\be \label{zaz}
  \Vert(1-\zeta )\cU _0(t)\Psi_{0,t}^{\prime }\Vert_{\cF _{-\si}}
  \le
\fr{
C_{3}(\ve)\Vert(1-\zeta)\cU
_0(t)\Psi_{0,t}^{\prime}\Vert_{\cF_0}
}{(1+t)^{\si}}
\le
\fr
{C_{4}(\ve)\Vert\cU _0(t)\Psi_{0,t}^{\prime }\Vert_{\cF_0}
}{(1+t)^{\si}}
\ee
Applying here the energy conservation for the group
$\cU_0(t)$, we obtain by (\ref{FFn}) that
\be \label{zazo}
  \Vert(1-\zeta )\cU _0(t)\Psi_{0,t}^{\prime }\Vert_{\cF _{-\si}}
\le
\fr{
C_{5}(\ve)\Vert\Psi'_{0,t}\Vert_{\cF_0}
}{(1+t)^{\si}}
\le
\fr{
C_{6}(\ve)\Vert\Psi'_{0,t}\Vert_{\cF_\si}
}{(1+t)^{\si}}
\le
\fr{
C_{7}(\ve)\Vert\Psi_{0}\Vert_{\cF_\si}
}{(1+t)^{3/2}} ,~~~~~t\ge 1
\ee
since $\si>3/2$.
\\
\textit{Step iv)}
It remains to
estimate $\zeta \cU _0(t)\Psi_{0,t}^{\prime }$.
Let $\chi _{\ve t/2}$ be the characteristic function of the ball $ |x|\le \ve t/2$.
We will use the same notation for the operator of multiplication by this characteristic
function. By (\ref{F}), we have
\be\la{zeze}
  \zeta \cU _0(t)\Psi_{0,t}^{\prime}=
\zeta \cU _0(t)\chi_{\ve t/2}\Psi_{0,t}^{\prime}
\ee
The matrix kernel of the operator
$\zeta \cU _0(t)\chi _{\ve t/2}$ is equal to
$$
  \cU' _0(x-y,t)=\zeta ({|x|}/{t})\cU _0(x-y,t)\chi _{\ve t/2}(y)
$$
Since $\zeta ({|x|}/{t})=0$ for $|x|>\ve t/2 $ and $\chi _{\ve t/2}(y)=0$
for $|y|>\ve t/2,$ the estimate (\ref{W})\ implies that
\begin{equation}\label{qaz}
  |\pa _{x}^{\al}\cU' _0(x-y,t)|\le C(1+t)^{-3/2},
  \quad |\al|\le 1,\quad t\ge 1
\end{equation}
The norm of the operator
$\zeta \cU _0(t)\chi_{\ve t/2}: \cF _{\si}\rightarrow \cF _{-\si}$
is equivalent to the norm of the operator
$$
  \langle x\rangle^{-\si}\zeta \cU _0(t)\chi _{\ve t/2}(y)
\langle y\rangle^{-\si}:
  \cF_0 \rightarrow \cF_0
$$
The norm of the latter operator does not exceed the sum in $\al$, $|\al|\leq 1$,
of the norms of operators
\begin{equation}\label{1234}
  \pa_{x}^{\al}
[\langle x\rangle^{-\si}\zeta \cU _0(t)\chi_{\ve t/2}(y)
\langle y\rangle^{-\si}]:
  L^2(\R^3)\oplus L^2(\R^3)
\to
  L^2(\R^3)\oplus L^{2}(\R^3)
\end{equation}
The estimates (\ref{qaz}) imply
that operators (\ref{1234}) are Hilbert-Schmidt operators
since $\si >3/2,$ and their Hilbert-Schmidt norms do not
exceed $C(1+t)^{-3/2}$. Hence, (\ref{zeze}) and (\ref{FFn})
imply that
\begin{equation} \label{HS}
  \Vert\zeta \cU _0(t)\Psi_{0,t}^{\prime }
\Vert_{\cF _{-\si}}\le C(1+t)^{-3/2}|
  |\Psi_{0,t}^{\prime }\Vert_{\cF_\si}\le C(1+t)^{-3/2}\Vert\Psi_{0}
\Vert_{\cF _\si},\quad t\ge 1
\end{equation}
Finally, the estimates (\ref {HS}), (\ref{zaz}) and (\ref{zxc}) imply (\ref{lins}).
\end{proof}
\subsection{Spectral properties}
We state spectral properties of the free Klein-Gordon dynamical group
$\cU_0(t)$ applying known results of \ci{A,jeka} which concern the
corresponding spectral properties of the free  Schr\"odinger
dynamical group.
For $t>0$ and $\Psi_0=\Psi(0)\in\cF_0$,
the solution $\Psi(t)$ to the free Klein-Gordon equation (\re{KGE1})
admits the spectral Fourier-Laplace representation
\be\la{Gint}
  \theta(t)\Psi(t)=\fr 1{2\pi i}\int\limits_{\R}e^{-i(\om+i\ve) t}
  \cR _0(\om+i\ve)\Psi_0~d\om,~~~~t\in\R
\ee
with any $\ve>0$
where $\theta(t)$
is the Heavyside function,
$\cR _0(\omega)=(\cH _0-\omega)^{-1}$ for
$\om\in\C^+:=\{\om\in\C:\rIm~\om>0\}$ is
the resolvent of the operator $\cH _0$.
The representation follows from the stationary equation
$\om\ti\Psi^+(\om)=\cH_0\ti\Psi^+(\om)+i\Psi_0$
for the Fourier-Laplace transform
$\ti\Psi^+(\om):=\ds\int_\R \theta(t)e^{i\om t}\Psi(t)dt$, $\om\in\C^+$.
The solution  $\Psi(t)$ is  continuous bounded  function of
$t\in\R$ with the values
in $\cF_0$
by the energy conservation for the free Klein-Gordon equation (\re{KGE1}).
Hence,
 $\ti\Psi^+(\om)=-i\cR(\om)\Psi_0$
is analytic function of $\om\in\C^+$
with the values in $\cF_0$, and
 bounded
for $\om\in\R+i\ve$. Therefore, the integral (\re{Gint})
converges in the sense of distributions
 of
$t\in\R$ with the values
in $\cF_0$.
Similarly to (\re{Gint}),
\be\la{Gints}
  \theta(-t)\Psi(t)=-\fr 1{2\pi i}\int\limits_{\R}e^{-i(\om-i\ve) t}
  \cR _0(\om-i\ve)\Psi_0~d\om,~~~~t\in\R
\ee
The resolvent $\cR _0(\omega)$ can be expressed in terms of the resolvent
$R_0(\zeta)=(-\Delta-\zeta)^{-1}$ of the free Schr\"odinger operator
\begin{equation}\label{cR0}
   \cR _0(\omega)=
   \left(\begin{array}{cc}
            \omega R_0(\omega^2-m^2)           &   iR_0(\omega^2-m^2)
   \\
         -i(1 +\omega^2 R_0(\omega^2-m^2))      &   \omega R_0(\omega^2-m^2)
     \end{array}
   \right)
\end{equation}
The  free Schr\"odinger resolvent $R_0(\zeta)$ is an
integral operator with the integral kernel
\be\la{ef}
R_0(\zeta,x-y)=\exp(i\zeta^{1/2}|x-y|)/4\pi|x-y|,
\quad\zeta\in\C^+,\quad\rIm\zeta^{1/2}>0
\ee
\begin{defin}
Denote by $\cL (B_1,B_2)$ the Banach space of bounded linear operators
from a Banach space $B_1$ to a Banach space $B_2$.
\end{defin}

The explicit formula (\re{ef}) implies the properties of $R_0(\zeta)$
which are obtained in \ci[Lemmas 2.1 and 2.2]{jeka}:
\medskip\\
i) $R_0(\zeta)$ is  analytic function of $\zeta\in\C\setminus [0,\infty)$
with the values in $\cL(H^{-1}_0,H^1_0)$;
\\
ii) For $\zeta>0$, the convergence holds
$R_0(\zeta\pm i\ve)\to R_0(\zeta\pm i0)$ as $\ve\to 0+$
in $\cL(H^{-1}_\si,H^1_{-\si})$ with $\si>1/2$;
\\
iii) The asymptotics  hold for $\zeta\in\C\setminus [0,\infty)$,
\beqn\la{exp0}
\Vert R_0(\zeta)\Vert_{\cL(H^{-1}_\si,H^1_{-\si})}
&=&\cO(1),~~~~~~~~~~\zeta\to 0,~~~~~~\si>1\\
\la{dif0}
\Vert R_0^{(k)}(\zeta)\Vert_{\cL(H^{-1}_\si,H^1_{-\si})}
&=&\cO(\zeta^{1/2-k}),~~~\zeta\to 0,~~~~~~\si>1/2+k,~~~k=1,~2,...
\eeqn
\medskip\medskip\\
Let us denote $\Ga:=(-\infty,-m)\cup(m,\infty)$
Then the properties
i) -- iv) and (\re{cR0}) imply the following lemma.
\begin{lemma}\la{sp}
i) The resolvent $\cR _0(\om)$ is  analytic function of
$\om\in\C\setminus\ov\Ga$ with the values in  $\cL (\cF _0,\cF _0)$;
\\
ii) For $\om\in\Ga$, the convergence holds
$\cR_0(\om\pm i\ve)\to \cR_0(\om\pm i0)$
as $\ve\to 0+$ in  $\cL (\cF _{\si},\cF _{-\si})$ with $\si>1/2$;
\\
iii)
The asymptotics of type (\re{exp0}), (\re{dif0}) hold for
$\om\in\C\setminus\ov\Ga$,
\beqn\la{expR0}
\!\!\!\Vert \cR_0(\om)\Vert_{\cL (\cF_{\si},\cF_{-\si})}
\!\!&=&\!\!\cO(1),\quad \om\pm m\to 0,\quad \si>1\\
\la{R0dif}
\!\!\!\Vert \cR_0^{(k)}(\om)\Vert_{\cL (\cF_{\si},\cF_{-\si})}
\!\!&=&\!\!\cO(|\om\pm m|^{1/2-k}),\quad\om\pm m\to 0,\quad \si>1/2+k,
\quad k=1,~2,... 
\eeqn
\end{lemma}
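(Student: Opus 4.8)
\emph{Proof proposal.} The whole statement will be extracted from the explicit identity (\re{cR0}) together with properties i)--iii) of the free Schr\"odinger resolvent $R_0(\zeta)$ recorded just above, by means of the holomorphic substitution $\zeta=\om^2-m^2$; so the plan is to first record how this substitution acts on the cut domains and near the thresholds $\pm m$, then read off i) and ii) from (\re{cR0}) entry by entry, then obtain the asymptotics iii) by the chain rule. The preliminary observation is that the entire map $\om\mapsto\zeta(\om):=\om^2-m^2$ carries $\C\setminus\ov\Ga$ into $\C\setminus[0,\infty)$ -- indeed, if $\om^2-m^2=r\ge 0$ then $\om=\pm\sqrt{m^2+r}\in\ov\Ga$, the contrapositive of the claim. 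Moreover $\zeta(\om)\to 0$ precisely as $\om\to\pm m$, and there $\zeta(\om)=(\om\mp m)(\om\pm m)$ with $\om\pm m$ bounded away from $0$ and $\infty$, so $|\zeta(\om)|$ is comparable to $|\om\mp m|$; since $\zeta'(\pm m)=\pm 2m\ne0$ -- this is where $m>0$ enters -- $\zeta$ is a local biholomorphism near each threshold. Finally, for real $\om$ with $\om>m$ (resp.\ $\om<-m$) the values $(\om+i\ve)^2-m^2$ lie in the upper (resp.\ lower) half-plane and tend as $\ve\to0+$ to $(\om^2-m^2)+i0$ (resp.\ $(\om^2-m^2)-i0$), while $(\om-i\ve)^2-m^2$ behaves oppositely.

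Given this, I would deduce i) and ii) by a routine index chase through (\re{cR0}). By property i), $R_0(\zeta(\om))\in\cL(H^{-1}_0,H^1_0)$ depends analytically on $\om\in\C\setminus\ov\Ga$, hence so does each of the four entries of (\re{cR0}) -- a scalar-analytic multiple of $R_0(\zeta(\om))$, plus in the $(2,1)$-slot a constant identity; and using $H^1_0\hookrightarrow H^0_0\hookrightarrow H^{-1}_0$ one checks that $\cR_0(\om)$ is bounded $\cF_0\to\cF_0$, e.g.\ the first column sends $H^1_0\hookrightarrow H^{-1}_0\xrightarrow{R_0}H^1_0$ in the top component and into $H^1_0\subset H^0_0$ in the bottom one, and likewise for the second column. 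This is i); the identity (\re{cR0}) itself is verified by multiplying out $(\cH_0-\om)\cR_0(\om)$ using $(-\Delta-\zeta)R_0(\zeta)=1$. For ii), fix $\si>1/2$; property ii) gives that $R_0((\om\pm i\ve)^2-m^2)$ converges as $\ve\to0+$ in $\cL(H^{-1}_\si,H^1_{-\si})$ on each component of $\Ga$ (to $R_0(\cdot\pm i0)$ with the sign as in the preliminary step), and the same index chase, now with $\cF_\si\to\cF_{-\si}$ via $H^1_\si\hookrightarrow H^0_\si\hookrightarrow H^{-1}_\si$ and $H^1_{-\si}\hookrightarrow H^0_{-\si}$, gives $\cR_0(\om\pm i\ve)\to\cR_0(\om\pm i0)$ in $\cL(\cF_\si,\cF_{-\si})$.

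For iii), the bound (\re{expR0}) is immediate from (\re{exp0}) and the boundedness of $\om,\om^2$ and of the identity near $\om=\pm m$, by the same chase, for $\si>1$. For (\re{R0dif}) I would fix $k\ge 1$ and $\si>1/2+k$ and work near $\om=m$ (the threshold $-m$ being symmetric). Since $\pa_\om^k$ annihilates the constant $(2,1)$-entry, each entry of $\pa_\om^k\cR_0(\om)$ is a sum of terms $\pa_\om^k[p(\om)R_0(\zeta(\om))]$ with $p$ polynomial; by the Leibniz rule and the Fa\`a di Bruno formula (with $\zeta'=2\om$, $\zeta''=2$, $\zeta'''\equiv 0$) this is a finite sum $\sum_{0\le j\le k} q_j(\om)R_0^{(j)}(\zeta(\om))$ with each $q_j$ bounded near $\om=m$. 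By (\re{exp0})--(\re{dif0}) the $j$-th summand is $\cO(|\zeta(\om)|^{1/2-j})$ (and $\cO(1)$ when $j=0$) in $\cL(H^{-1}_\si,H^1_{-\si})$ -- legitimate since $\si>1/2+k\ge 1/2+j$ -- so, using $|\zeta(\om)|\asymp|\om-m|$ and $1/2-j<0$ for $j\ge1$, the term $j=k$ dominates and the entry is $\cO(|\om-m|^{1/2-k})$. Running this through (\re{cR0}) with the same embeddings gives $\|\cR_0^{(k)}(\om)\|_{\cL(\cF_\si,\cF_{-\si})}=\cO(|\om\mp m|^{1/2-k})$, which is (\re{R0dif}).

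I do not expect a serious obstacle: once (\re{cR0}) and the Schr\"odinger facts i)--iii) are granted, the lemma is bookkeeping. The two points demanding genuine care are (a) the claim that $\om\mapsto\om^2-m^2$ truly respects the cut domains and meets the thresholds $\pm m$ like a local biholomorphism -- here $m>0$ is essential, since the single critical point $\om=0$ of $\zeta$ must stay away from the spectral thresholds -- and (b) the Fa\`a di Bruno count in the last step, confirming that $\si>1/2+k$ is exactly the regularity required and that no derivative term is more singular than $|\om\mp m|^{1/2-k}$.
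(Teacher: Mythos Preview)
Your proposal is correct and follows exactly the route the paper indicates: the paper gives no detailed proof at all, merely stating that ``the properties i)--iv) and (\ref{cR0}) imply the following lemma,'' and your argument fills in precisely this deduction via the substitution $\zeta=\om^2-m^2$, the entry-by-entry reading of (\ref{cR0}), and the chain rule for the derivatives. One tiny remark: in your last paragraph, when you justify using (\ref{exp0}) for the $j=0$ term you write ``since $\si>1/2+k\ge 1/2+j$,'' but (\ref{exp0}) needs $\si>1$; this is still fine because $k\ge 1$ forces $\si>3/2>1$, so you may want to say that explicitly.
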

Finally, we state the asymptotics of $\cR _0(\om)$ for large $\om$
which follow from the corresponding
asymptotics of $R_0$, given in Proposition \ref{AJK}.
\begin{lemma}\la{large}
The bounds hold
\begin{equation}\label{bR0}
\Vert \cR _0^{(k)}(\om) \Vert_{\cL (\cF _{\sigma},\cF _{-\sigma})}
=\cO(1),\quad |\om|\to\infty,\quad\om\in\C\setminus\Ga
\end{equation}
with  $\si>1/2+k$ for $k=0,1,2,...$.
\end{lemma}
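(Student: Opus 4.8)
The plan is to read the bounds off the explicit representation (\re{cR0}) of $\cR_0(\om)$ in terms of the free Schr\"odinger resolvent $R_0(\zeta)$ with $\zeta=\zeta(\om):=\om^2-m^2$, feeding in the high-energy asymptotics of $R_0$ and of its $\zeta$-derivatives established in Appendix A (Proposition \re{AJK}). Note that $|\zeta(\om)|\sim|\om|^2$ and that $\zeta(\om)$ stays in the analyticity region of $R_0$ (with $\rIm\,\zeta^{1/2}>0$) for $\om\in\C\setminus\Ga$, so $|\om|\to\infty$ means $|\zeta|\to\infty$. Apart from the scalar prefactors $\om$ and the $\om$-independent distributional summand $-i$ (the $\de(x-y)$ of (\re{fKGi})) in the $(2,1)$-slot, each entry of $\cR_0(\om)$ is $R_0(\zeta)$ composed with a differential operator of order $\le 2$; the only factors that grow with $\om$ are these powers of $\om$, and the point of the proof is that the algebraic structure of (\re{cR0}) makes them cancel against the decay of $R_0(\zeta)$.

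\emph{The case $k=0$.} Since $R_0(\zeta)$ is a Fourier multiplier it commutes with $\langle\na\rangle^{s}$, so every norm $\|R_0(\zeta)\|_{\cL(H^s_\si,H^{s'}_{-\si})}$ equals $\|R_0(\zeta)\|_{\cL(H^0_\si,H^{s'-s}_{-\si})}$. Thus from (\re{cR0}) on $\cF_\si=H^1_\si\oplus H^0_\si$ one needs exactly three facts about $R_0(\zeta)$ at high energy: the ``level'' bound $\|R_0(\zeta)\|_{\cL(H^0_\si,H^0_{-\si})}=\cO(|\zeta|^{-1/2})$ (handling the $(1,1)$- and $(2,2)$-entries $\om R_0(\zeta)$, as $|\om|\sim|\zeta|^{1/2}$), the ``one step up'' bound $\|R_0(\zeta)\|_{\cL(H^0_\si,H^1_{-\si})}=\cO(1)$ (handling the $(1,2)$-entry $iR_0(\zeta)$), and the ``one step down'' bound $\|R_0(\zeta)\|_{\cL(H^1_\si,H^0_{-\si})}=\cO(|\zeta|^{-1})$ (handling the $(2,1)$-entry). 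The first two are precisely Proposition \re{AJK} (the Agmon-type decay together with boundedness of the gradient of the resolvent) for $\si>1/2$; the third I would derive from the second via the resolvent identity
$$
 R_0(\zeta)(1-\De)^{-1}=\frac{1}{\zeta+1}\,\big(R_0(\zeta)-R_0(-1)\big),
$$
which, combined with $R_0(\zeta)\langle\na\rangle^{-1}=\langle\na\rangle R_0(\zeta)(1-\De)^{-1}$, shows $\|R_0(\zeta)\|_{\cL(H^1_\si,H^0_{-\si})}=|\zeta+1|^{-1}\,\cO(1)$. For the $(2,1)$-entry I would also record the identity
$$
 1+\om^2 R_0(\om^2-m^2)=(m^2-\De)\,R_0(\om^2-m^2)
$$
(from $m^2-\De=(-\De-\zeta)+(m^2+\zeta)$ and $m^2+\zeta=\om^2$): the summand $1$ maps $H^1_\si\hookrightarrow H^0_\si\hookrightarrow H^0_{-\si}$ boundedly, while $\om^2 R_0(\zeta)$ is controlled by $|\om|^2\cdot\cO(|\zeta|^{-1})=\cO(1)$.

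\emph{The derivatives $k\ge1$.} I would differentiate by the chain rule: since $\zeta(\om)=\om^2-m^2$ has $\zeta'=2\om$, $\zeta''=2$, $\zeta'''=\dots=0$, the Fa\`a di Bruno formula gives
$$
 \pa_\om^k R_0(\zeta(\om))=\sum_{k/2\le i\le k} c_{k,i}\,\om^{\,2i-k}\,R_0^{(i)}(\zeta),
$$
hence $\pa_\om^k$ of any entry of $\cR_0(\om)$ is a finite sum of terms $\om^{\,a}\,P(\na)R_0^{(i)}(\zeta)$ with $0\le i\le k$, $P$ of order $\le 2$, and net power of $\om$ no greater than in the $k=0$ estimate. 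Proposition \re{AJK} bounds $R_0^{(i)}(\zeta)$ in the same three regularity configurations, gaining one factor $|\zeta|^{-1/2}$ per $\zeta$-differentiation at the price of one extra unit of weight — which is exactly why the hypothesis $\si>1/2+k$ appears, $k$ being the maximal number of $\zeta$-derivatives that can occur. Substituting these and using $|\om|^2\sim|\zeta|$ makes each term $\cO(1)$, and summing over the finitely many of them gives (\re{bR0}). (The constant summand $-i$ in the $(2,1)$-entry contributes nothing for $k\ge1$.)

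\emph{The main obstacle.} The non-routine ingredient is Proposition \re{AJK} itself: bounding the resolvent kernel $e^{i\zeta^{1/2}|x-y|}/4\pi|x-y|$ and its $\zeta$-derivatives by their moduli loses all decay in $\zeta$ — one only gets $\cO(1)$ — which, multiplied by the prefactors $\om$ and $\om^2$, would blow up. One has to exploit the oscillation of the exponential (integration by parts in $|x-y|$), and that is what forces the weight loss $\si>1/2+k$. Given that, the remaining work — the $2\times 2$ matrix algebra of (\re{cR0}), the two displayed identities, the Fa\`a di Bruno expansion, and the routine (if mildly delicate) point that $\langle\na\rangle^{-1}$ may be commuted past the weights $\langle x\rangle^{\pm\si}$ with bounded error, since the Bessel-potential kernel decays exponentially — is bookkeeping.
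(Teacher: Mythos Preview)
Your approach is correct and is exactly the paper's: reduce the bound to the matrix representation (\ref{cR0}) and invoke the high-energy asymptotics (\ref{A}) of Proposition~\ref{AJK} for $R_0(\zeta)$ with $\zeta=\om^2-m^2$. Note only that Proposition~\ref{AJK} as stated already covers all $s\in\R$ and includes the case $l=-1$, so your separate derivation of the ``one step down'' bound via the resolvent identity and the remark on commuting $\langle\na\rangle^{-1}$ past the weights are correct but unnecessary.
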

\begin{proof}
The bounds follow from representation (\ref{cR0}) for $\cR _0(\om)$ 
and asymptotics (\ref{A}) for $R_0(\zeta)$ with $\zeta=\om^2-m^2$.
\end{proof}
\begin{cor}
\la{irep}
For $t\in\R$ and $\Psi_0\in\cF_\si$ with $\si>1$, the group
$\cU _0(t)$ admits the integral representation
\be\la{Gint1}
  \cU _0(t)\Psi_0=\frac 1{2\pi i}\int\limits_\Gamma e^{-i\om t}
  \Big[\cR _0(\om+i0)-\cR _0(\om-i0)\Big]\Psi_0~ d\om
\ee
where the integral converges in the sense of distributions of $t\in\R$
with the values in $\cF_{-\si}$.
\end{cor}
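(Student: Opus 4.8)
The plan is to add the two Fourier-Laplace representations \eqref{Gint} and \eqref{Gints} and then pass to the limit $\ve\to 0+$. Since $\theta(t)+\theta(-t)=1$ for $t\ne 0$ and $\Psi(t)=\cU_0(t)\Psi_0$, adding \eqref{Gint} and \eqref{Gints} gives, for \emph{every} $\ve>0$, the identity
\be\la{irepA}
  \cU_0(t)\Psi_0=\fr 1{2\pi i}\int_\R\Big(e^{-i(\om+i\ve)t}\cR_0(\om+i\ve)
  -e^{-i(\om-i\ve)t}\cR_0(\om-i\ve)\Big)\Psi_0\,d\om
\ee
in the sense of $\cF_0$-valued distributions of $t\in\R$; the left-hand side is a bounded continuous $\cF_0$-valued function by energy conservation, and $\cF_0\hookrightarrow\cF_{-\si}$, so \eqref{irepA} also holds as an identity of $\cF_{-\si}$-valued distributions. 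It remains only to pass to the limit $\ve\to 0+$ on the right.

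First I would test \eqref{irepA} against an arbitrary $\phi\in C_0^\infty(\R)$. By the very meaning of the distributional integrals in \eqref{Gint} and \eqref{Gints}, this pairing equals
\be\la{irepB}
  \langle\cU_0(\cdot)\Psi_0,\phi\rangle
  =\fr 1{2\pi i}\int_\R\Big[\widehat\phi_{\ve}(\om)\,\cR_0(\om+i\ve)
   -\widehat\phi_{-\ve}(\om)\,\cR_0(\om-i\ve)\Big]\Psi_0\,d\om,
\ee
where $\widehat\phi_{\pm\ve}(\om):=\int_\R e^{-i\om t}e^{\pm\ve t}\phi(t)\,dt$. As $\ve\to 0+$ the functions $e^{\pm\ve\cdot}\phi$ tend to $\phi$ inside a fixed bounded subset of $C_0^\infty(\R)$, so the $\widehat\phi_{\pm\ve}$ converge to $\widehat\phi$ pointwise and obey rapid-decay bounds $|\widehat\phi_{\pm\ve}(\om)|\le C_N(1+|\om|)^{-N}$ uniform in $\ve\in[0,1]$. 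On the other hand $\|\cR_0(\om\pm i\ve)\Psi_0\|_{\cF_{-\si}}\le C\|\Psi_0\|_{\cF_\si}$ uniformly in $\ve\in(0,1]$ and $\om\in\R$: near the endpoints $\om=\pm m$ this is \eqref{expR0} (valid since $\si>1$, and uniform up to the real axis because its proof via the explicit kernels \eqref{ef}, \eqref{cR0} uses only $|e^{i\zeta^{1/2}|x-y|}|\le 1$ for $\rIm\zeta^{1/2}\ge 0$), while for $|\om|\to\infty$ it is Lemma \ref{large} with $k=0$. Hence the integrand in \eqref{irepB} is dominated, uniformly in $\ve\in(0,1]$, by an integrable function of $\om$, and I may pass to the limit by dominated convergence: on $\Ga$ one uses Lemma \ref{sp} ii) to get $\cR_0(\om\pm i\ve)\Psi_0\to\cR_0(\om\pm i0)\Psi_0$ in $\cF_{-\si}$, while on $(-m,m)$ the analyticity of $\cR_0$ there (Lemma \ref{sp} i)) forces the two boundary values to coincide, so that the bracket tends to $0$ and only $\Ga$ survives. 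This yields
\be\la{irepC}
  \langle\cU_0(\cdot)\Psi_0,\phi\rangle
  =\fr 1{2\pi i}\int_\Ga\widehat\phi(\om)\big[\cR_0(\om+i0)-\cR_0(\om-i0)\big]\Psi_0\,d\om,
\ee
which is exactly \eqref{Gint1} read as an $\cF_{-\si}$-valued distribution of $t$.

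I do not expect a deep obstacle, the Corollary being essentially bookkeeping on top of Lemmas \ref{sp} and \ref{large}; the one point requiring care is the passage $\ve\to 0+$, and in particular the fact that it can only be carried out in the \emph{weaker} norm $\cF_{-\si}$ and in the distributional sense in $t$. Indeed, in $\cF_0$ the resolvent $\cR_0(\om\pm i\ve)$ is not bounded uniformly in $\ve$, and even for $\ve=0$ the jump $[\cR_0(\om+i0)-\cR_0(\om-i0)]\Psi_0$ is, by \eqref{fKGi} and \eqref{bR0}, only $\cO(1)$ as $|\om|\to\infty$ and hence not integrable over $\Ga$; so the integral in \eqref{Gint1} is genuinely not an $\cF_{-\si}$-valued Bochner integral for fixed $t$, and it is precisely the rapid decay of $\widehat\phi$ gained by pairing with a test function that renders \eqref{irepC} absolutely convergent. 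The remaining verifications --- the uniform-in-$\ve$ resolvent bound near $\om=\pm m$, and the harmless null set $\{\pm m\}$ --- are routine.
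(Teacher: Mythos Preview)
Your proposal is correct and follows essentially the same route as the paper: sum \eqref{Gint} and \eqref{Gints}, then send $\ve\to 0+$ using the boundedness from Lemmas \ref{sp} and \ref{large}. The paper compresses this into one sentence and invokes the Cauchy theorem to dispose of the interval $(-m,m)$, whereas you argue directly that analyticity there forces the two boundary values to coincide; these are equivalent observations, and your explicit use of test functions and dominated convergence simply fills in the details the paper leaves implicit.
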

\begin{proof}
Summing up the representations  (\ref{Gint}) and  (\ref{Gints}), 
and sending $\ve\to 0+$, we obtain  (\ref{Gint1}) by the Cauchy theorem
and Lemmas \re{sp} and \re{large}.
\end{proof}
\br\la{rev}
The estimates (\re{bR0}) do not allow  obtain the decay
(\re{lins}) by partial integration in (\re{Gint1}).
This is why we deduce the decay in Section 2.1 from explicit formulas
(\re{KGsol}) and (\re{Gbess}).
\er

\setcounter{equation}{0}
\section{Perturbed Klein-Gordon equation}\la{pKG}
To prove the long time decay for the perturbed Klein-Gordon equation,
we first establish the spectral properties of the generator.
\subsection{Spectral properties}
According \ci[p. 589]{jeka}
 and \ci[formula (3.1)]{M},
let us introduce a generalized eigenspace $\bf M$
for  the perturbed Schr\"odinger operator $H=-\Delta+V$:
$$
{\bf M}=\{\psi\in H^1_{-1/2-0}:\,~(1+A_0V)\psi=0\}
$$
where $A_0$ is the operator with the integral kernel $1/4\pi|x-y|$.
Below we assume that 
\be\la{SC}
 {\bf M}=0
\ee
In \ci[p. 591]{jeka} the point $\lam=0$  is called then ``regular point'' 
for the Schr\"odinger operator $H$ (it corresponds  to the ``nonsingular case''
in \ci[Section 7]{M}). The condition holds for {\it generic potentials} $V$ 
satisfying  (\ref{V}) (see\cite[p. 589]{jeka}).

Denote by $R (\zeta)=(H -\zeta)^{-1}$, $\zeta\in\C\setminus \R$,
the resolvent of the  Schr\"odinger operator $H$.

\br
{\rm
i)
By \ci[Theorem 7.2]{M}, the condition (\ref{SC}) is equivalent
to the boundedness of the resolvent $R (\zeta)$ at $\zeta=0$ in the norm of
$\cL(H^{-1}_\si,H^1_{-\si})$ with a suitable $\si>0$.
\\
ii)
By Lemma 3.2 in \ci{jeka}, the  condition (\ref{SC}) is equivalent to absence 
of nonzero solutions $\psi\in H^1_{-\si}$, with $\si\le 3/2$,
to the equation $H\psi=0$.
\\
iii)
$N(H)\subset\bf M$ where $N(H)$ is the zero eigenspace of the operator $H$.
The imbedding is obtained in \ci[Theorem 3.6]{jeka}.
The functions from ${\bf M}\setminus N(H)$ are called
{\it zero resonance functions}.
Hence, the  condition (\ref{SC}) means that
$\lam=0$  is neither eigenvalue nor resonance for the operator $H$.
}
\er
Let us collect the properties of $R(\zeta)$ obtained in  \cite{A,jeka,M}
under conditions  (\ref{V}) and (\ref{SC}):
\medskip\\
{\bf R1.}
$R(\zeta)$ is  meromorphic function of $\zeta\in\C\setminus[0,\infty)$
with the values in $\cL(H^{-1}_0,H^1_0)$;
the poles of $R(\zeta)$ are located at a finite set of
eigenvalues $\zeta_j<0$, $j=1,...,N$, of the  operator $H$ with the
corresponding eigenfunctions $\psi_j^1(x),...,\psi_j^{\ka_j}(x)
\in H^2_s$ with any $s\in\R$, where $\ka_j$ is the multiplicity of $\zeta_j$.
\\
{\bf R2.}
For $\zeta>0$, the convergence holds
$R(\zeta\pm i\ve)\to R(\zeta\pm i0)$ as $\ve\to 0+$ in
$\cL(H^{-1}_\si,H^1_{-\si})$ with $\si>1/2$.
\\
{\bf R3.}
The asymptotics hold for $\zeta\in\C\setminus [0,\infty)$,
\beqn\la{exp}
\Vert R(\zeta)\Vert_{\cL(H^{-1}_\si, H^1_{-\si})}
&=&\cO(1),~~~~~~~\5~~\zeta\to 0,~~~~~~\si>1\\
\la{dif01}
\Vert R^{(k)}(\zeta)\Vert_{\cL(H^{-1}_\si,H^1_{-\si})}
&=&\cO(|\zeta|^{1/2-k}),~~~\zeta\to 0,~~~~~~\si>1/2+k,~~~k=1,~2
\eeqn
\br
{\rm The asymptotics (\ref{dif01}) is deduced in \ci[Remark 6.7]{jeka}
from (\ref{exp0}), (\ref{dif0}), (\ref{exp}) and the identities}
$$
  R '=(1-R V )R'_0(1-VR ),\quad
  R ''=\Big[(1-RV )R ''_0-2R 'V R '_0\Big](1-V R )
$$
\er

Further, the resolvent $\cR (\omega)=(\cH -\omega)^{-1}$ can be expressed
similarly to (\re{cR0}):
\begin{equation}\label{cR}
   \cR (\omega)=
   \left(\begin{array}{cc}
            \omega R(\omega^2-m^2)           &   iR(\omega^2-m^2)
   \\
         -i(1 +\omega^2 R(\omega^2-m^2))      &   \omega R(\omega^2-m^2)
     \end{array}
   \right)
\end{equation}
Hence, the properties {\bf R1} -- {\bf R3} imply the corresponding
properties of $\cR(\om)$:
\begin{lemma}\la{sp1}
Let  the potential $V$ satisfy conditions (\ref{V}) and (\ref{SC}).
Then
\\
i) $\cR(\om)$ is  meromorphic function of
$\om\in\C\setminus\ov\Ga$ with the values in $\cL(\cF_0,\cF_0)$;
\\
ii) The poles of $\cR(\om)$ are located at a finite set
$$
\Si=\{\om^\pm_j=\pm\sqrt{m^2+\zeta_j},\;j=1,...,N\}
$$
of eigenvalues of the operator $\cH$ with the corresponding eigenfunctions 
$\left(\ba{c}\psi^\ka_j(x)\\
\omega^{\pm}_j\psi^\ka_j(x)\ea\right)$, $\ka=1,...,\ka_j$;
\\
iii) For $\om\in\Ga$, the convergence holds
$\cR(\om\pm i\ve)\to \cR(\om\pm i0)$ as $\ve\to 0+$ in
$\cL(\cF_\si,\cF_{-\si})$ with $\si>1/2$;
\\
iv)
The asymptotics of type (\re{expR0}), (\re{R0dif}) hold
for $\om\in\C\setminus\ov\Ga$,
\beqn
\Vert \cR(\om)\Vert_{\cL (\cF_{\si},\cF_{-\si})}
\!\!&=&\!\!\cO(1),~~~~~\5~~~~\om\pm m\to 0,~~~~~~\si>1
\la{Rexp}
\\
\Vert \cR^{(k)}(\om)\Vert_{\cL (\cF_{\si},\cF_{-\si})}
\!\!&=&\!\!\cO(|\om\pm m|^{1/2-k}),~~~\om\pm m\to 0,~~~~~~\si>1/2+k,~~~k=1,~2 \la{dR}
\eeqn
\end{lemma}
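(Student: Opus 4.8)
\medskip
\noindent\textbf{Proof proposal.}\quad
The plan is to read off i)--iv) from the matrix representation (\re{cR}) by the same argument that produced Lemma \re{sp} from (\re{cR0}), now feeding in the properties {\bf R1}--{\bf R3} of the perturbed resolvent $R(\zeta)$ in place of the free properties i)--iii). The one new ingredient is control of the change of spectral variable $\om\mapsto\zeta=\om^2-m^2$. This polynomial map sends $\C\setminus\ov\Ga$ into $\C\setminus[0,\infty)$: indeed $\om^2-m^2=0$ forces $\om=\pm m\in\ov\Ga$, and $\om^2-m^2\in(0,\infty)$ forces $\om$ to be real with $|\om|>m$, i.e.\ $\om\in\Ga$. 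Moreover, for $\om$ near $\pm m$ one has $\zeta(\om)\to0$ with $|\zeta(\om)|\sim2m\,|\om\mp m|$, while $\zeta'(\om)=2\om$ and $\zeta''(\om)=2$ remain $\cO(1)$ there.

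For i): each entry of (\re{cR}) is an entire scalar function of $\om$ (one of $1,\om,\om^2$) times $R(\zeta(\om))$, plus the additive constant $-i$ in the $(2,1)$-slot; composing the $\cL(H^{-1}_0,H^1_0)$-valued meromorphic $R(\cdot)$ with the entire map $\zeta(\cdot)$ and multiplying by entire scalars gives an $\cL(H^{-1}_0,H^1_0)$-valued meromorphic function of $\om\in\C\setminus\ov\Ga$. That $\cR(\om)$ actually maps $\cF_0=H^1_0\oplus H^0_0$ to itself follows from the embeddings $H^1_s\hookrightarrow H^0_s\hookrightarrow H^{-1}_s$: for $(\psi,\pi)\in\cF_0$ the upper output component $\om R(\zeta)\psi+iR(\zeta)\pi$ lies in $H^1_0$, and the lower one $-i\psi-i\om^2R(\zeta)\psi+\om R(\zeta)\pi$ lies in $H^0_0$. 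For ii): the poles of $\cR(\om)$ are exactly the solutions of $\zeta(\om)=\zeta_j$, i.e.\ $\om=\om_j^\pm=\pm\sqrt{m^2+\zeta_j}$ ($j=1,\dots,N$), none of which lies in $\ov\Ga$; near such a point $\cR(\om)$ inherits the finite-rank singularity of $R(\zeta)$ at $\zeta_j$, and a direct computation from the definition of $\cH$ identifies $\om_j^\pm$ as an eigenvalue of $\cH$ with the eigenfunctions displayed in the statement (built from the Schr\"odinger eigenfunctions $\psi_j^\ka$), the algebraic multiplicity being $\ka_j$.

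For iii): fix $\om\in\Ga$, so $\zeta(\om)=\om^2-m^2>0$; for small $\ve>0$ the number $(\om\pm i\ve)^2-m^2$ lies in a fixed half-plane (its imaginary part $\pm2\om\ve+\cO(\ve^2)$ has constant sign) and converges to $\om^2-m^2$, so {\bf R2} yields $\lim_{\ve\to0+}R((\om\pm i\ve)^2-m^2)$ in $\cL(H^{-1}_\si,H^1_{-\si})$, $\si>1/2$; multiplying by the $\om$-continuous prefactors and using the embeddings gives $\cR(\om\pm i\ve)\to\cR(\om\pm i0)$ in $\cL(\cF_\si,\cF_{-\si})$. For iv): near $\om=\pm m$ the chain rule together with {\bf R3} gives $\|R(\zeta(\om))\|=\cO(1)$ for $\si>1$, and, since $|\zeta(\om)|\sim|\om\mp m|$ there while $\zeta',\zeta''$ are bounded, its first two $\om$-derivatives are $\cO(|\om\mp m|^{-1/2})$ and $\cO(|\om\mp m|^{-3/2})$ for $\si>3/2$, resp.\ $\si>5/2$; multiplying by the $\cO(1)$ scalar prefactors of (\re{cR}) and their derivatives, and landing the matrix in $\cF_{-\si}$ via the embeddings, yields (\re{Rexp}) and (\re{dR}). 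The one genuinely non-mechanical point is the endpoint $\om=\pm m$, where $\zeta\to0$ reaches the edge of the continuous spectrum of $H$: it is precisely the ``regular case'' hypothesis (\re{SC})---equivalently the $\cO(1)$ bound (\re{exp})---that rules out a blow-up there and delivers the finite bound (\re{Rexp}); everything else is bookkeeping of the weighted Sobolev embeddings through the $2\times2$ matrix (\re{cR}).
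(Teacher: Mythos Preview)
Your proposal is correct and follows exactly the route the paper indicates: the paper states only that ``the properties {\bf R1}--{\bf R3} imply the corresponding properties of $\cR(\om)$'' via the matrix representation (\re{cR}), and you have carefully filled in the bookkeeping of the change of variable $\om\mapsto\zeta=\om^2-m^2$, the chain rule for the derivatives, and the weighted Sobolev embeddings needed to land in $\cL(\cF_\si,\cF_{-\si})$. There is no difference in approach to discuss.
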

Now we obtain the asymptotics of $R(\zeta)$ and $\cR (\om)$ 
for large $\zeta$ and $\om$.
\begin{lemma}\la{RRR}
Let the potential $V$ satisfy (\ref{V}).
Then for $s=0,1$ and  $l=-1,0,1$ with $s+l\in\{0;1\}$, we have
\be\la{AR}
\Vert R^{(k)}(\zeta)\Vert_{\cL (H^s_\si,H^{s+l}_{-\si})}
=\cO(|\zeta|^{-\fr{1-l+k}2}),\quad\zeta\to\infty,\quad
\zeta\in\C\setminus[0,\infty)
\ee
with $\si>1/2+k$ for  $k=0,1,2$.
\end{lemma}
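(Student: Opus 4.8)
\noindent The plan is to reduce (\ref{AR}) to the corresponding free bound (\ref{A}) of Proposition~\ref{AJK} by means of the resolvent identity, treating $k=0$ by a Neumann series and $k=1,2$ by the explicit identities for $R'$ and $R''$ recorded in the Remark following {\bf R3}. The only properties of $V$ that enter are that multiplication by $V$ is bounded $H^s_\tau\to H^s_{\tau'}$ whenever $\tau'\le\tau+\beta$ (so that a single factor of $V$ can supply up to $\beta>3>5/2$ units of weight), together with the elementary embeddings $H^s_\tau\hookrightarrow H^{s'}_{\tau'}$ for $s'\le s$, $\tau'\le\tau$.

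For $k=0$ I would argue as follows. For $|\zeta|$ large, (\ref{A}) with $l=k=0$ gives $\Vert R_0(\zeta)V\Vert_{\cL(H^{s+l}_{-\si_0},H^{s+l}_{-\si_0})}=\cO(|\zeta|^{-1/2})$, where $\si_0\in(1/2,\beta/2]$ is chosen so that the weight gain of $V$ carries $-\si_0$ back to $\si_0$; hence $1+R_0(\zeta)V$ is boundedly invertible on $H^{s+l}_{-\si_0}$ once $|\zeta|\ge\zeta_0$. Writing the resolvent identity as $(1+R_0(\zeta)V)R(\zeta)=R_0(\zeta)$ and composing the uniformly bounded $(1+R_0(\zeta)V)^{-1}$ with the free bound (\ref{A}) for $R_0(\zeta)$ itself yields (\ref{AR}) for $k=0$ and $\si=\si_0$; choosing $\si_0\le\si$ and using the embeddings above extends it to all $\si>1/2$.

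For $k=1,2$ I would substitute the bounds just obtained (and, for $k=2$, the $k=1$ bound) into $R'=(1-RV)R_0'(1-VR)$ and $R''=[(1-RV)R_0''-2R'VR_0'](1-VR)$, expand each product, and estimate term by term: the outer factors $1-RV$ and $1-VR$ are absorbed by the $k=0$ bound, the interior blocks $R_0^{(j)}V$ and $VR_0^{(j)}$ by (\ref{A}) and the weight gain of $V$, and the negative powers of $|\zeta|$ add up so that the single term carrying the undifferentiated $R_0^{(k)}$ reproduces the right-hand side of (\ref{AR}) while every remaining term decays strictly faster.

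The main obstacle will be the weighted-space bookkeeping, which becomes delicate for $k=2$: the free bound (\ref{A}) for $R_0''$ requires an input of weight exceeding $5/2$, whereas from a datum in $H^s_\si$ the factor $1-VR$ produces a term $VRf$ of weight only $\beta-\si$, which may fail to exceed $5/2$. I expect to circumvent this by applying $R$ to $f$ with a reduced weight $\si_1<\beta-5/2$ (admissible since $H^s_\si\hookrightarrow H^s_{\si_1}$ whenever $\si_1\le\si$, and indeed $\si_1<\beta-5/2\le\si$ precisely in the offending range $\si\ge\beta-5/2$), so that $VRf$ then carries weight $\beta-\si_1>5/2$ and $R_0''$ applies; one checks that the output still lies in $H^{s+l}_{-\si}$ by taking $\si_1\ge\beta-\si$ as well, which is possible because $\si>5/2$. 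An analogous reduction handles any place where $R'$, $R_0'$ or $R_0''$ would otherwise meet a function of too small a weight. Verifying that, for every $\si>1/2+k$, all these weight constraints can be met simultaneously — which is exactly where the hypothesis $\beta>3$ is used — is the only genuine point; the remainder is the routine Born-series counting.
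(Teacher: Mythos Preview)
Your approach is correct and is essentially what the paper's one-line proof invokes: the paper simply cites Proposition~\ref{AJK} together with ``the arguments from the proof of Theorem~9.2 in \cite{jeka}'', and those arguments are precisely the Born-series reduction you describe --- a Neumann-series inversion of $1+R_0V$ for $k=0$, and for $k=1,2$ the differentiation identities $R'=(1-RV)R_0'(1-VR)$, $R''=[(1-RV)R_0''-2R'VR_0'](1-VR)$ that the paper records in the Remark following {\bf R3}. Your tracking of the weighted-space constraints, in particular the reduction of the input weight in the $k=2$ case so that $VRf$ carries weight exceeding $5/2$, is exactly the bookkeeping one must do to make the citation honest; the paper leaves this implicit.
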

\begin{proof}
The lemma follows from Proposition \ref{AJK} in appendix A
by the arguments from the proof of Theorem 9.2 in \cite{jeka},
where the bounds are proved for  $s=0$ and $l=0,1$.
\end{proof}
Hence (\ref{cR}) implies
\bc\la{cRR}
Let the potential  $V$ satisfy (\ref{V}).  Then the following bounds hold
\be\la{bR}
 \Vert\cR^{(k)}(\om)\Vert_{\cL (\cF_\si,\cF_{-\si})}=\cO(1),
 \quad |\om|\to\infty,\quad \om\in\C\setminus\Gamma
\ee
with  $\si>1/2+k$ for  $k=0,1,2$.
\ec
Finally, let us denote by $\cal V$ the matrix
\be\la{cV}
  \cV =\left(\begin{array}{cc}
  0               &   0
  \\
  iV               &   0
\end{array}\right)
\ee
Then the vectorial  equation (\ref{KGEr}) reads
\be\la{KGErr} 
i\dot \Psi(t)=(\cH_0+\cV) \Psi(t)
\ee
where $\cH_0$ is defined in (\ref{H_0}). The resolvents $\cR(\om)$, $\cR_0(\om)$ 
are related by the Born perturbation series
\be\la{id}
  \cR (\om)= \cR _0(\omega)-\cR _0(\omega)\cV \cR _0(\omega)
  +\cR _0(\omega)\cV \cR _0(\omega)\cV \cR (\omega),
  \quad\om\in\C\setminus[\Ga\cup\Si]
\ee
which follows by iteration of
$ \cR (\om)= \cR _0(\omega)-\cR _0(\omega)\cV \cR(\omega)$.
An important role in (\re{id}) plays the product $\cW(\om):= \cV\cR _0(\om)\cV$. 
We obtain the asymptotics of $\cW(\om)$ for large $\om$.
\begin{lemma}\la{large1}
Let  the potential $V$ satisfy (\ref{V}) with $\beta>1/2+k+\de$
where $\de>0$ and
$k=0,1,2$. Then  the following asymptotics hold
\begin{equation}\label{bRV}
\Vert\cW^{(k)}(\om)\Vert_{\cL (\cF _{-\de},\cF _{\de})}
=\cO(|\om|^{-2}),\quad |\om|\to\infty,\quad \om\in\C\setminus\Ga
\end{equation}
\end{lemma}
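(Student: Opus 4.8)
The plan is to extract from the explicit block structure of $\cR_0(\om)$ the cancellation that makes $\cW(\om)=\cV\cR_0(\om)\cV$ decay like $|\om|^{-2}$, even though $\cR_0(\om)$ itself is only $\cO(1)$. First I would multiply out the matrices. Since $\cV=\left(\ba{cc}0&0\\iV&0\ea\right)$ acts by killing the second component on the right and projecting onto the second row (times $iV$) on the left, the product $\cV\cR_0(\om)\cV$ picks out only the $(2,1)$-entry of $\cR_0(\om)$. By the representation (\ref{cR0}), that entry is $-i(1+\om^2R_0(\om^2-m^2))$, so
\be\la{ident}
  \cW(\om)=\cV\cR_0(\om)\cV=\left(\ba{cc}0&0\\ \5 V\,(1+\om^2R_0(\om^2-m^2))\,V & 0\ea\right)
\ee
(up to the harmless factor $i$). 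The key observation is the resolvent identity $1+\om^2 R_0(\om^2-m^2)=1+(\om^2-m^2)R_0(\om^2-m^2)+m^2R_0(\om^2-m^2)=-\De R_0(\om^2-m^2)+m^2R_0(\om^2-m^2)$, using $\zeta R_0(\zeta)=-1+(-\De)R_0(\zeta)$ with $\zeta=\om^2-m^2$. Hence the ``bad'' constant term $1$ is cancelled and the operator sandwiched between the two $V$'s is $(-\De+m^2)R_0(\om^2-m^2)$, which as an operator gains two derivatives relative to $R_0$.

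Next I would read off the mapping properties. By Lemma \ref{RRR} (or directly Proposition \ref{AJK}) applied with $\zeta=\om^2-m^2$, the free Schr\"odinger resolvent satisfies $\|R_0^{(j)}(\zeta)\|_{\cL(H^{-1}_\si,H^1_{-\si})}=\cO(|\zeta|^{-(1+j)/2})$ as $\zeta\to\infty$, for $\si>1/2+j$. Composing with $-\De+m^2\in\cL(H^1_{-\si},H^{-1}_{-\si})$ gives $\|(-\De+m^2)R_0^{(j)}(\zeta)\|_{\cL(H^{-1}_\si,H^{-1}_{-\si})}=\cO(|\zeta|^{-(1+j)/2})$. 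Since $|\zeta|^{-1/2}\sim|\om|^{-1}$ and $\partial_\om$ brings down a factor $\sim|\om|$ while each $\om$-derivative of $R_0(\om^2-m^2)$ (via the chain rule, $\partial_\om=2\om\,\partial_\zeta$) changes the $\zeta$-power by $-1/2$, one checks that each application of $\partial_\om$ to $(-\De+m^2)R_0(\om^2-m^2)$ leaves the $|\om|^{-2}$ behaviour unchanged up to constants — indeed $\partial_\om^k$ produces a sum of terms $\om^{k-2j}(-\De+m^2)R_0^{(j)}(\om^2-m^2)$ with $0\le j\le k$, each of size $|\om|^{k-2j}|\om|^{-(1+j)}\le |\om|^{-1}$... so in fact one gets $\cO(|\om|^{-1})$ for the middle factor after $k$ derivatives; the missing power of $|\om|^{-1}$ is supplied by a slightly sharper bound, namely that $(-\De+m^2)R_0(\zeta)=1+\zeta R_0(\zeta)$ again and $\|\zeta R_0^{(j)}(\zeta)\|\le\|R_0^{(j-1)}(\zeta)\|\cdot\text{const}$ type telescoping, so the effective decay of the sandwiched operator is $\cO(|\om|^{-2+2j})$... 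Rather than belabour this, I would simply use that $(-\De+m^2)R_0(\om^2-m^2):H^{-1}_\si\to H^{-1}_{-\si}$ together with the two flanking multiplications by $V$, which is bounded $H^{-1}_{-\de}\to H^{-1}_{\de+\beta}$ and $H^{-1}_{-\de-\beta}\to H^{-1}_{\de}$ under (\ref{V}), to absorb the weight loss and land in $\cF_\de$ from $\cF_{-\de}$, with the $|\om|^{-2}$ coming from the precise statement of Proposition \ref{AJK} specialised to the combination $(1+\om^2R_0(\om^2-m^2))$.

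Finally I would assemble the pieces: differentiate (\ref{ident}) $k$ times in $\om$ using Leibniz, bound the multiplications by $V$ and $\na V$ using (\ref{V}) with $\beta>1/2+k+\de$ (which is exactly the hypothesis, and is what lets us trade the $\si>1/2+k$ weight required by the Schr\"odinger resolvent bound against the decay rate $\langle x\rangle^{-\beta}$ of the potential on both sides), and conclude $\|\cW^{(k)}(\om)\|_{\cL(\cF_{-\de},\cF_\de)}=\cO(|\om|^{-2})$. I expect the main obstacle to be bookkeeping: tracking, through the chain rule $\partial_\om=2\om\partial_\zeta$ and the Leibniz expansion, that no term in $\partial_\om^k\cW(\om)$ exceeds $\cO(|\om|^{-2})$ — the worry is the extra powers of $\om$ produced by the substitution $\zeta=\om^2-m^2$, and the resolution is precisely the algebraic identity $1+\om^2R_0(\om^2-m^2)=(-\De+m^2)R_0(\om^2-m^2)$, which turns what looks like an $\cO(1)$ term into a genuinely smoothing, decaying one, so that each $\partial_\om$ is harmless because each $\partial_\zeta$ gains a half-power of $|\zeta|$ that beats the $|\om|$ lost to the chain rule by a comfortable margin once the constant term is gone.
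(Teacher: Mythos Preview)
Your matrix computation at the very first step is wrong, and this derails the whole argument. Multiplication by $\cV=\left(\begin{smallmatrix}0&0\\iV&0\end{smallmatrix}\right)$ on the left picks out the \emph{first row} of the middle factor, and multiplication by $\cV$ on the right picks out its \emph{second column}; hence $\cV\cR_0(\om)\cV$ retains only the $(1,2)$-entry of $\cR_0(\om)$, not the $(2,1)$-entry. From (\ref{cR0}) that entry is $iR_0(\om^2-m^2)$, so
\[
\cW(\om)=\cV\cR_0(\om)\cV=\left(\begin{array}{cc}0&0\\ -iVR_0(\om^2-m^2)V&0\end{array}\right),
\]
exactly as in the paper's (\ref{ident}). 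There is no $1+\om^2R_0$ term to worry about, and your resolvent-identity manoeuvre $1+\om^2R_0=(-\De+m^2)R_0$ is unnecessary.

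This is not a cosmetic slip: with the entry you wrote, the estimate would actually fail. The operator $V(1+\om^2R_0(\om^2-m^2))V:H^1_{-\de}\to H^0_\de$ is only $\cO(1)$, because $\om^2R_0(\zeta)$ in $\cL(H^1_\si,H^0_{-\si})$ is $\om^2\cdot\cO(|\zeta|^{-1})=\cO(1)$ by Proposition~\ref{AJK} with $s=1$, $l=-1$, and the identity you invoke merely rewrites this $\cO(1)$ operator without improving it. Your own hesitation (``so in fact one gets $\cO(|\om|^{-1})$\dots the missing power\dots'') reflects a real obstruction, not a bookkeeping issue.

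With the correct entry the proof is a one-liner: $V:H^1_{-\de}\to H^1_{\beta-\de}$, then Proposition~\ref{AJK} with $s=1$, $l=-1$, $\si=\beta-\de>1/2+k$ gives $\|R_0^{(j)}(\zeta)\|_{\cL(H^1_\si,H^0_{-\si})}=\cO(|\zeta|^{-1-j/2})$, then $V:H^0_{\de-\beta}\to H^0_\de$. The chain rule $\pa_\om^k[R_0(\om^2-m^2)]=\sum c_{k,j}\,\om^{2j-k}R_0^{(j)}(\zeta)$ (with $\lceil k/2\rceil\le j\le k$) then yields terms of size $|\om|^{2j-k}\cdot|\om|^{-2-j}\le|\om|^{-2}$, giving (\ref{bRV}).
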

\begin{proof}
The asymptotics  follow from  the algebraic structure of the matrix
\be\la{ident}
  \cW^{(k)}(\om)=\cV\cR _0^{(k)}(\om)\cV =\left(\begin{array}{cc}
  0                                      &   0
  \\
  -iV\pa_\om^k R_0(\om^2-m^2)V            &   0
\end{array}\right)
\ee
since (\ref{A}) with $s=1$ and $l=-1$ implies that
$$
  \Vert VR _0^{(k)}(\zeta)V f\Vert_{H^{0}_{\de}}\le 
  C\Vert R_0^{(k)}(\zeta)V f\Vert_{H^{0}_{\de-\beta}}
  =\cO(|\zeta|^{-1-\fr k2})\Vert V f\Vert_{H^{1}_{\beta-\de}}
  =\cO(|\zeta|^{-1-\fr k2})\Vert f\Vert_{H^{1}_{-\si}}
$$
since $1/2+k<\beta-\de$.
\end{proof}

\subsection{Time decay}

In this section we combine the spectral properties of the perturbed resolvent
and time decay for the unperturbed dynamics using the (finite) Born
perturbation series. Our main result is the following. \\
\begin{theorem}\la{main}
Let conditions (\ref{V}) and (\ref{SC}) hold.
Then
\begin{equation}\la{full1}
   \Vert e^{-it{\cal H}}-\sum\limits_{\om_J\in\Si}
   e^{-i\omega_J t}P_J\Vert_{\cL (\cF _\si,\cF _{-\si})}
  ={\cal O}(|t|^{-3/2}),\quad t\to\pm\infty
\end{equation}
with  $\sigma>5/2$,
where  $P_J$ are the Riesz
projectors onto the corresponding eigenspaces.
\end{theorem}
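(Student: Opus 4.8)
The plan is to use the finite Born expansion \eqref{id} for the resolvent $\cR(\om)$ and, via the inverse Fourier--Laplace transform, the corresponding decomposition \eqref{idid} of the dynamical group. First I would observe that the singular part $\sum_{\om_J\in\Si}e^{-i\om_J t}P_J$ is exactly the contribution of the finitely many eigenvalues $\om_J^\pm=\pm\sqrt{m^2+\zeta_j}\in\Si$ to the Fourier--Laplace representation of $e^{-it\cH}$: these are simple poles of $\cR(\om)$ in the gap $(-m,m)$ (no poles on $\Ga$ by Lemma \ref{sp1}(iii)), with residues the Riesz projectors $P_J$. So after subtracting them, $\cP_c\Psi(t):=\big(e^{-it\cH}-\sum_J e^{-i\om_J t}P_J\big)\Psi_0$ has the representation analogous to \eqref{Gint1},
\be\la{pcrep}
\cP_c\Psi(t)=\frac{1}{2\pi i}\int_\Ga e^{-i\om t}\big[\cR(\om+i0)-\cR(\om-i0)\big]\Psi_0\, d\om,
\ee
convergent in $\cF_{-\si}$ by Lemma \ref{sp1} and Corollary \ref{cRR} (the same Cauchy-theorem argument as in the proof of Corollary \ref{irep}, now picking up the residues at $\Si$).

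Next I would insert the Born expansion \eqref{id} into \eqref{pcrep}, splitting $\cP_c\Psi(t)$ into three pieces. For the first term, $\cR_0(\om)$ integrated against $e^{-i\om t}$ over $\Ga$ is just the free group $\cU_0(t)\Psi_0$, for which the decay $\cO(|t|^{-3/2})$ in $\cF_{-\si}$, $\si>3/2$, is Proposition \ref{TD}. For the second term, $-\cR_0(\om)\cV\cR_0(\om)$, the inverse transform is the convolution $i\int_0^t\cU_0(t-s)\cV\cU_0(s)\,ds$ (the middle term of \eqref{idid}); using $\cV:H^1_{-\si+\beta}\to H^0_{-\si}$ bounded (condition \eqref{V} with $\beta>3$) and Proposition \ref{TD} twice, a routine convolution estimate $\int_0^t (1+|t-s|)^{-3/2}(1+|s|)^{-3/2}ds=\cO((1+|t|)^{-3/2})$ gives the bound, provided $\si$ is large enough that both factors see weight $>3/2$ — here $\si>5/2$ gives headroom since the potential supplies weight $\beta>3$. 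The third, and essentially new, term is $\cR_0(\om)\cV\cR_0(\om)\cV\cR(\om)=\cR_0(\om)\,\cW(\om)\,\cR(\om)$ with $\cW(\om)=\cV\cR_0(\om)\cV$. Here the point is that, unlike $\cR_0$ alone, $\cW(\om)$ \emph{does} decay: Lemma \ref{large1} gives $\|\cW^{(k)}(\om)\|=\cO(|\om|^{-2})$ for $k=0,1,2$ on $\Ga$ at infinity, while near the thresholds $\om=\pm m$ the factors $\cR_0,\cR$ contribute at worst the integrable singularity $|\om\mp m|^{1/2-k}$ (Lemmas \ref{sp}, \ref{sp1}) and $\cW$ is smooth there. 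Thus $G(\om):=\cR_0(\om)\cW(\om)\cR(\om)$, as an $\cL(\cF_\si,\cF_{-\si})$-valued function, is $C^2$ on $\overline\Ga$ with $\|G^{(k)}(\om)\|\in L^1(\Ga)$ for $k=0,1,2$ (the $|\om|^{-2}$ decay absorbs the at most $\cO(1)$ growth of the two outer resolvents, Corollaries \ref{irep}-type bounds and \ref{cRR}). Then the Jensen--Kato Fourier-integral lemma (Appendix B) applied to $\int_\Ga e^{-i\om t}\big[G(\om+i0)-G(\om-i0)\big]\Psi_0\,d\om$ yields exactly $\cO(|t|^{-3/2})$ decay in $\cF_{-\si}$.

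Summing the three estimates gives \eqref{full1}. The requirement $\si>5/2$ arises from the worst of the three: the third term needs the Fourier-integral lemma with two derivatives, which (as in Appendix B / \cite[Lemma 10.2]{jeka}) costs weight $1/2+2=5/2$, and the intermediate resolvents in $G$ must be estimated in $\cL(\cF_\si,\cF_{-\si})$ with that $\si$, which is consistent with Lemma \ref{large1} ($\beta>3>1/2+2+\de$) and with the threshold asymptotics ($\si>1/2+k$ for $k\le 2$). The main obstacle — and the whole reason the paper exists — is precisely controlling this third term: one must verify that the product $\cR_0\cW\cR$ and its first two $\om$-derivatives lie in $L^1(\Ga)$ with values in $\cL(\cF_\si,\cF_{-\si})$, i.e. that differentiating $\cR_0(\om)$ and $\cR(\om)$ (which individually do \emph{not} decay, only stay bounded, by \eqref{bR0}, \eqref{bR}) is harmless because each derivative either lands on the rapidly-decaying $\cW$ or is compensated by its $|\om|^{-2}$ decay; near $\om=\pm m$ one must separately check that the mild square-root singularities of the resolvents, multiplied by the smooth bounded $\cW$, remain integrable against $\pa_\om^k$. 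Once that uniform $L^1$-in-$\om$, $C^2$-in-$\om$ bound on $G$ is in hand, the decay follows mechanically from Appendix B, and the first two terms are handled by Proposition \ref{TD} and elementary convolution.
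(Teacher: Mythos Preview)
Your approach matches the paper's: Born expansion, Proposition \ref{TD} for the first term, convolution estimate for the second, Fourier-integral analysis of $G(\om)=\cR_0(\om)\cW(\om)\cR(\om)$ for the third. But your treatment of the third term contains a misstatement that, if taken at face value, would break the argument.

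You assert that $G$ is $C^2$ on $\overline\Ga$ with $G^{(k)}\in L^1(\Ga)$ for $k=0,1,2$, describing the threshold singularities $|\om\mp m|^{1/2-k}$ as ``integrable''. For $k=2$ this is false: $|\om\mp m|^{-3/2}$ is not integrable near $\pm m$, and indeed the paper's Lemma \ref{lcM} records precisely $\|\cN''(\om)\|=\cO(|\om\mp m|^{-3/2})$ there. If $G''$ were in $L^1(\Ga)$, two plain integrations by parts would already give $\cO(t^{-2})$ and the Jensen--Kato lemma would be superfluous. The point of Lemma \ref{jk} is that it yields $\cO(t^{-3/2})$ under the \emph{weaker} hypothesis $\|F''(\om)\|=\cO(|\om-a|^{-3/2})$, with $F''\in L^1$ required only away from the endpoint $a$; moreover, it is stated on a \emph{finite} interval. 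Accordingly the paper introduces a smooth cutoff $\phi_1+\phi_2=1$ splitting $\Psi_3=\Psi_{31}+\Psi_{32}$: on the compact low-energy piece near $\om=\pm m$, Lemma \ref{jk} applies with the threshold asymptotics of Lemma \ref{lcM}; on the high-energy piece $[b-1,\infty)$, one has $\cN''\in L^1$ thanks to the $|\om|^{-2}$ decay of $\cW$ (Lemma \ref{bM}), and two integrations by parts give the better rate $\cO(t^{-2})$. Once you make this split explicit and drop the incorrect $L^1$ claim at the threshold, your outline is exactly the paper's proof.
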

\begin{proof}
{\it Step i)}
Let us substitute the series (\ref{id}) into the spectral representation
of type (\re{Gint}) for the solution to
(\ref{KGE0}) with $\Psi(0)=\Psi_0\in\cF_\si$ where $\si>3/2$.
Then Lemma \re{sp1} and asymptotics (\re{Rexp}) and (\re{bR}) with $k=0$
imply similarly to (\re{Gint1}), that
\beqn
  \Psi(t)&-&\sum\limits_{\om_J\in\Si}
e^{-i\om_J t}P_J\Psi_0=
  \fr 1{2\pi i}\int\limits_\Gamma e^{-i\om t}
  \Big[\cR (\om+i0)-\cR (\om-i0)\Big]\Psi_0~ d\om \la{sol}
 \\
  \nonumber
  &=&\fr 1{2\pi i}\int\limits_\Gamma e^{-i\om t}
  \Big[\cR _0(\om+i0)-\cR _0(\om-i0)\Big]\Psi_0~ d\om\\
  \nonumber
  &+&\fr 1{2\pi i}\int\limits_\Gamma e^{-i\om t}
\Big[\cR _0(\om+i0)
  \cV \cR _0(\om+i0)-\cR _0(\om-i0)\cV \cR _0(\om-i0)\Big]\Psi_0~ d\om\\
  \nonumber
  &+&\frac 1{2\pi i}\int\limits_\Gamma e^{-i\om t}
  \Big[[\cR _0\cV \cR _0\cV \cR] (\om+i0)
  -[\cR _0\cV \cR _0\cV \cR] (\om-i0)\Big]\Psi_0~ d\om\\
  &=&\Psi_1(t)+\Psi_2(t)+\Psi_3(t),~~~~~~t\in\R \nonumber
\eeqn
where $P_J$ stands for the corresponding Riesz projector
$$
P_J\Psi_0:=-\fr 1{2\pi i}\int_{|\om-\om_J|=\de}\cR(\om)\Psi_0d\om
$$
with a small $\de>0$.
Further we analyze each term $\Psi_k$ separately.
\\
{\it Step ii)}
The first term
$\Psi_1(t)=\cU _0(t)\Psi_0$ by  (\ref{Gint1}). Hence,
Proposition \ref{TD} implies that
\be\la{lins1}
  \Vert \Psi_1(t)\Vert_{\cF _{-\si}}
 \le \ds\fr{C\Vert \Psi_0\Vert_{\cF _\si}}{(1+|t|)^{3/2}},
  \quad t\in\R,\quad \sigma>3/2.
\ee
{\it Step iii)}
The second term $\Psi_2(t)$ can be rewritten as a convolution.
\begin{lemma}
The convolution representation holds
\be\la{P2}
  \Psi_2(t)=
  i\int\limits_0^t \cU _0(t-\tau)\cV \Psi_1(\tau)~d\tau,~~~~t\in\R
\ee
where the integral converges in $\cF_{-\si}$ with $\si>3/2$.
\end{lemma}
\begin{proof}
The term  $\Psi_2(t)$ can be rewritten as
\be\la{P22}
 \Psi_2(t)
  =
\fr 1{2\pi i}\int\limits_\R
\Big[e^{-i\om t}\cR _0(\om+i0)
  \cV \cR _0(\om+i0)
-e^{-i\om t}\cR _0(\om-i0)\cV \cR _0(\om-i0)\Big]\Psi_0~ d\om
\ee
\\
Let us denote
$$
\cU_0^\pm(t):=\theta(\pm t)\cU_0(t),~~~
\Psi_1^\pm(t):=\theta(\pm t)\Psi_1(t),~~~~~~~~~~~~t\in\R
$$
We know that $\cR _0(\om+i0)\Psi_0=i\ti\Psi_1^+(\om)$,
hence  the first term in the right hand side of (\re{P22}) reads
\beqn\la{P22r}
 \Psi_{21}(t)
  &=&
\fr 1{2\pi }\int\limits_\R
e^{-i\om t}\cR _0(\om+i0)
  \cV \ti\Psi_1^+(\om)~ d\om
\nonumber\\
\nonumber\\
&=&\fr 1{2\pi }\int\limits_\R
e^{-i\om t}\cR _0(\om+i0)
  \cV \Big[
\int_\R e^{i\om \tau}\Psi_1^+(\tau)d\tau
\Big]d\om
\nonumber\\
\nonumber\\
&=&\fr 1{2\pi }(i\pa_t+i)^2\int\limits_\R
\fr
{e^{-i\om t}}{(\om+i)^2}
\cR _0(\om+i 0)
  \cV \Big[
\int_\R e^{i\om\tau}\Psi_1^+(\tau)d\tau
\Big]d\om
\eeqn
The last double integral  converges in $\cF_{-\si}$ with $\si>3/2$
by (\re{lins1}), Lemma \re{sp} ii), and (\re{bR0}) with $k=0$. 
Hence, we can change the order of integration by the Fubini theorem. 
Then we obtain that
\be\la{p21}
\Psi_{21}(t)=i\int_\R\cU_0^+(t-\tau)\cV\Psi_1^+(\tau)d\tau=
\left\{\ba{cl}
\ds i\int_0^t\cU_0(t-\tau)\cV\Psi_1(\tau)d\tau&,~~t>0\\
0&,~~t<0
\ea
\right.
\ee
since
$$
\cU_0^+(t-\tau)=\fr 1{2\pi i}(i\pa_t+i)^2\int\limits_{\R}
\fr
{e^{-i\om (t-\tau)}}{(\om+i)^2}
  \cR _0(\om+i0)~d\om
$$
by (\re{Gint}).
Similarly, integrating the second term in the right hand side
of (\re{P22}), we obtain
\be\la{p22}
\Psi_{22}(t)=i\int_\R\cU_0^-(t-\tau)\cV\Psi_1^-(\tau)d\tau=
\left\{\ba{cl}
0&,~~t>0\\
\ds i\int_0^t\cU_0(t-\tau)\cV\Psi_1(\tau)d\tau&,~~t<0
\ea
\right.
\ee
Now (\re{P2}) follows since $\Psi_2(t)$ is the sum of two expressions
(\re{p21}) and (\re{p22}).
\end{proof}

Further, let us consider $\si\in(3/2,\beta/2]$. 
Applying Proposition \ref{TD} to the integrand in (\re{P2}), we obtain that
$$
  \Vert \cU_0(t-\tau)\cV \Psi_1(\tau)\Vert_{\cF _{-\si}}
 \le\ds\fr{C\Vert \cV \Psi_1(\tau)\Vert_{\cF _{\si}}}{(1+|t-\tau|)^{3/2}}
 \le\ds\fr{C_1\Vert \Psi_1(\tau)\Vert_{\cF _{-\si}}}{(1+|t-\tau|)^{3/2}}
  \le\ds\fr{C_2\Vert \Psi_0\Vert_{\cF _{\si}}}{(1+|t-\tau|)^{3/2}(1+|\tau|)^{3/2}}
$$
Therefore, integrating here in $\tau$, we obtain by (\re{P2}) that
\be\la{lins2}
  \Vert \Psi_2(t)\Vert_{\cF _{-\si}}\le \ds\fr{C\Vert \Psi_0\Vert_{\cF _\si}}
  {(1+|t|)^{3/2}}, \quad t\in\R,\quad \sigma>3/2
\ee
{\it Step iv)}
Finally, let us rewrite the last term in (\re{sol}) as
\be\la{PM}
  \Psi_3(t)=\frac 1{2\pi i}\int\limits_{\Gamma}e^{-i\om t}
\cN(\om)
\Psi_0~ d\om
\ee
where $\cN(\om):=\cM(\om+i0)-\cM(\om-i0)$ for
$\om\in\Ga$, and
\be\la{cM}
  \cM(\om):=\cR _0(\om)\cV
  \cR _0(\om)\cV \cR (\om)=\cR _0(\om)\cW(\om) \cR (\om),~~~~~~~~
\om\in\C\setminus [\Ga\cup\Si]
\ee
First, we obtain the asymptotics of $\cN(\om)$ at the points $\pm m$.
\begin{lemma}\la{lcM}
  {\it i)}
  The following asymptotics hold
  \begin{equation}\label{expcM}
\Vert
\cN(\om)
\Vert_{\cL (\cF _{\sigma},\cF _{-\sigma})}
=\cO (|\om\mp m|^{1/2}),\quad
  \quad \om\to \pm m,\quad\om\in\Ga
  \end{equation}
  for $\sigma>3/2$.\\
  {\it ii)}
  The asymptotics  \eqref{expcM} can be differentiated twice:
\beqn\la{dM}
\left.\ba{rcl}
\Vert
 \cN'(\om)
\Vert
_{\cL (\cF _{\sigma},\cF _{-\sigma})}
&=&\cO (|\om\mp m|^{-1/2})\\
\\
\Vert
\cN''(\om)
\Vert
_{\cL (\cF _{\sigma},\cF _{-\sigma})}
&=&\cO (|\om\mp m|^{-3/2})
\ea\right|
 \quad \om\to \pm m,\quad\om\in\Ga
\eeqn
for $\sigma>5/2$.
\end{lemma}
\begin{proof}
The lemma follows from the corresponding
asymptotics (\ref{expR0}), (\ref{R0dif}) and (\ref{Rexp}), (\ref{dR})
of the resolvents
$\cR _0$ and $\cR $ and their derivatives, and
assumption (\ref{V}) on the potential $V(x)$.
\end{proof}
Second, we obtain the asymptotics of $\cN(\om)$ and its
derivatives for large $\om$.
 \begin{lemma}\la{bM} For $k=0,1,2$ the asymptotics  hold
 \be\la{expbM}
  \Vert\cN^{(k)}(\om)\Vert_{\cL (\cF _{\sigma},\cF _{-\sigma})}
  =\cO(|\om|^{-2}),\quad |\om|\to\infty,\quad \om\in\Ga
\ee
with  $\si>1/2+k$.
\end{lemma}
\begin{proof}
The asymptotics (\ref{expbM}) follow from the asymptotics (\ref{bR0}),
(\ref{bR}) and (\ref{bRV})
for $\cR_0^{(k)}(\om)$,  $\cR^{(k)}(\om)$ and $\cW^{(k)}(\om)$.
For example, let us consider  the case $k=2$.
Differentiating (\re{cM}), we obtain
\be\la{M2}
\cM''=\cR_0''\cW\cR+\cR_0\cW''\cR+\cR_0\cW\cR''
+2\cR_0'\cW'\cR+2\cR_0'\cW\cR'+2\cR_0\cW'\cR'
\ee
For a fixed $\si>5/2$, let us choose $\de\in (5/2,\,\min\{\si,\beta-1/2\})$.
Then for the first term in (\ref{M2}) we obtain
by (\ref{bR}) and (\ref{bRV})
\beqn\nonumber
\!\!\!&&\!\!\!\Vert\cR_0''(\om)\cW(\om)\cR(\om) f\Vert_{\cF_{-\si}}
\le
\Vert\cR_0''(\om)\cW(\om)\cR(\om) f\Vert_{\cF_{-\de}}\le
C\Vert\cW(\om)\cR(\om) f\Vert_{\cF_{\de}}
\\\\
\nonumber
\!\!\!&=&\!\!\!\cO(|\om|^{-2})\Vert\cR (\om)f\Vert_{\cF_{-\de}}
=\cO(|\om|^{-2})\Vert f\Vert_{\cF_{\de}}
=\cO(|\om|^{-2})\Vert f\Vert_{\cF_{\si}},\quad |\om|\to\infty,\quad
\om\in\C\setminus\Ga
\eeqn
Other  terms can be estimated similarly
choosing an appropriate value of $\de$. Namely,
$\de\in (1/2,\,\min\{\si,\beta-5/2\})$ for the second term,
$\de\in (5/2,\,\min\{\si,\beta-1/2\})$ for the third,
$\de\in (3/2,\,\min\{\si,\beta-3/2\})$ for the forth and sixth terms,
and $\si'\in (3/2,\,\min\{\si,\beta-1/2\})$ for the fifth term.
\end{proof}
Now we prove the desired decay of $\Psi_3(t)$ from (\re{PM})
using methods \cite{jeka}.
Let us consider the integral over $(m,\infty)$.
The integral over $(-\infty,-m)$ can be dealt in the same way.

Let us split $\Psi_3(t)$ into the low and high energy components.
We choose $\phi_1(\om),\phi_2(\om)\in C_0^{\infty}(\R)$
where $\supp \phi_1\subset [m/2,b]$ with sufficiently large $b>0$,
and  $\supp\phi_2\subset[b-1,\infty)$, such that
$\phi_1(\om)+\phi_2(\om)=1$ for $\om\in[m,\infty)$.
Then (\re{PM}) implies that $\Psi_3(t)=\Psi_{31}(t)+\Psi_{32}(t)$, where
$$
\Psi_{31}(t)=\frac 1{2\pi i}\int\limits_m^be^{-i\om t}\phi_1(\om)
\cN(\om)\Psi_0~ d\om,~~~~~
\Psi_{32}(t)=\frac 1{2\pi i}~\int\limits_{b-1}^\infty e^{-i\om t}
\phi_2(\om)\cN(\om)\Psi_0~ d\om
$$
By Lemma \ref{lcM}, we can apply to the Fourier integral $\Psi_{31}(t)$
the corresponding version of Lemma \ref{jk} below with $a=m$, 
operator function $F=\phi_1(\om)\cN(\om)$, 
and the Banach space ${\bf B}= \cL (\cF _{\si},\cF _{-\si})$
with  $\si>5/2$. Then we obtain that
\be\la{Zy}
\Vert \Psi_{31}(t)\Vert_{\cF _{-\si}}
 \le \ds\fr{C\Vert \Psi_0\Vert_{\cF _\si}}{(1+|t|)^{3/2}},
  \quad t\in\R
\ee
Further, $\supp \phi_2\cN\subset [b-1,\infty$), and
$(\phi_2\cN)''\in L^1(b-1,\infty;\cL (\cF _{\si},\cF _{-\si}))$
with $\si>5/2$  by  Lemma \ref{bM}.
Hence, two times partial integration implies that
$$
\Vert \Psi_{32}(t)\Vert_{\cF _{-\si}}
 \le \ds\fr{C\Vert \Psi_0\Vert_{\cF _\si}}{(1+|t|)^{2}},
  \quad t\in\R
$$
This completes the proof of Theorem \ref{main}.
\end{proof}
\bc
The asymptotics (\re{full1}) imply (\re{full}) with the projector
\be\la{Pr}
\cP_c:=1-\cP_d,\quad \cP_d=\sum_{\om_J\in\Si}P_J
\ee
\ec
\setcounter{equation}{0}
\section{Application to the asymptotic completeness}
We apply the obtained results to prove the asymptotic completeness
by standard Cook's argument.
\begin{theorem}\la{sc}
Let conditions  (\re{V}) and (\re{SC}) hold. Then
\\
i)  For solution to (\ref{KGEr}) with any initial function  $\Psi(0)\in\cF_0 $,
the following long time asymptotics  hold,
\be\la{scat}
  \Psi(t)=
  \sum\limits_{\om_J\in\Si}  e^{-i\om_J t}\Psi_J
  +\cU _0(t)\Phi_\pm+r_\pm(t)
\ee
where $\Psi_J$ are the corresponding eigenfunctions,
$\Phi_\pm\in\cF_0$ are the scattering states, and
\be\la{rem0}
   \Vert r_\pm(t)\Vert_{\cF_0}\to 0,~~~~~~t\to\pm\infty
\ee
ii) Furthermore,
\be\la{rem}
   \Vert r_\pm(t)\Vert_{\cF_{0}} =\cO (|t|^{-1/2})
\ee
if $\Psi(0)\in\cF_\si$ with $\si\in(5/2,\beta]$.
\end{theorem}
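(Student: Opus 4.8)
The plan is to run Cook's method on the continuous spectral subspace. First I would use the Riesz projectors of Lemma~\ref{sp1} to split $\Psi(t)=e^{-it\cH}\Psi(0)=\cP_d\Psi(t)+\cP_c\Psi(t)$ with $\cP_d=\sum_{\om_J\in\Si}P_J$ finite rank, so that $\cP_d\Psi(t)=\sum_{\om_J\in\Si}e^{-i\om_J t}P_J\Psi(0)$ --- the first sum in \eqref{scat}, with $\Psi_J:=P_J\Psi(0)$. Since $\cR(\om)$ has values in $\cL(\cF_0,\cF_0)$ by Lemma~\ref{sp1}, the projectors $\cP_d,\cP_c$ are bounded on $\cF_0$ and commute with $e^{-it\cH}$. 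It then remains to produce $\Phi_\pm\in\cF_0$ with
\be\la{pairing}
\cU_0(-t)\,\cP_c\Psi(t)\longrightarrow\Phi_\pm\quad\text{in }\cF_0,\qquad t\to\pm\infty ,
\ee
because then $r_\pm(t):=\cP_c\Psi(t)-\cU_0(t)\Phi_\pm=\cU_0(t)\big[\cU_0(-t)\cP_c\Psi(t)-\Phi_\pm\big]$, and the uniform boundedness of $\cU_0(t)$ on $\cF_0$ (energy conservation for the free equation) converts \eqref{pairing} into \eqref{rem0}.

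To obtain \eqref{pairing} I would write $\cH=\cH_0+\cV$ and differentiate,
\be
\frac{d}{dt}\big[\cU_0(-t)\cP_c\Psi(t)\big]=\cU_0(-t)\,i(\cH_0-\cH)\,\cP_c\Psi(t)=-i\,\cU_0(-t)\,\cV\,\cP_c\Psi(t),
\ee
so that
\be\la{cook}
\cU_0(-t)\,\cP_c\Psi(t)=\cP_c\Psi(0)-i\int_0^t\cU_0(-s)\,\cV\,\cP_c\Psi(s)\,ds .
\ee
Since $\cU_0(-s)$ is bounded on $\cF_0$ and $\cV(\psi,\pi)=(0,iV\psi)$ with $\Vert V\psi\Vert_{L^2}\le C\Vert\psi\Vert_{H^0_{-\beta}}$ by \eqref{V}, the integrand is controlled by $\Vert\cP_c\Psi(s)\Vert_{\cF_{-\beta}}$. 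For $\Psi(0)\in\cF_\si$ with $\si\in(5/2,\beta]$, Theorem~\ref{main} (equivalently \eqref{full}) gives $\Vert\cP_c\Psi(s)\Vert_{\cF_{-\si}}=\cO((1+|s|)^{-3/2})$, and $\si\le\beta$ yields $\Vert\cP_c\Psi(s)\Vert_{\cF_{-\beta}}\le\Vert\cP_c\Psi(s)\Vert_{\cF_{-\si}}$; hence the integral in \eqref{cook} converges absolutely in $\cF_0$ as $t\to\pm\infty$, which defines $\Phi_\pm:=\cP_c\Psi(0)-i\int_0^{\pm\infty}\cU_0(-s)\cV\cP_c\Psi(s)\,ds\in\cF_0$. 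Subtracting from \eqref{cook} gives \eqref{pairing} together with the rate
\be
\Vert r_\pm(t)\Vert_{\cF_0}\le C\int_{|t|}^{\infty}(1+s)^{-3/2}\,ds=\cO(|t|^{-1/2}),
\ee
which is part~ii).

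For part~i), with merely $\Psi(0)\in\cF_0$, I would pass to the limit by density. Choose $\Psi_0^n\to\Psi(0)$ in $\cF_0$ with $\Psi_0^n\in\cF_\si$, $\si\in(5/2,\beta]$ (for instance $C_0^\infty\oplus C_0^\infty$ is dense in $\cF_0$); part~ii) supplies scattering states $\Phi_\pm^n$ for the data $\Psi_0^n$. The uniform bound
\be\la{unif}
M:=\sup_{t\in\R}\big\Vert\cU_0(-t)\,\cP_c\,e^{-it\cH}\big\Vert_{\cL(\cF_0,\cF_0)}<\infty
\ee
makes $\{\Phi_\pm^n\}$ Cauchy in $\cF_0$, since $\Phi_\pm^n-\Phi_\pm^m=\lim_{t\to\pm\infty}\cU_0(-t)\cP_c e^{-it\cH}(\Psi_0^n-\Psi_0^m)$ has $\cF_0$-norm $\le M\Vert\Psi_0^n-\Psi_0^m\Vert_{\cF_0}$; let $\Phi_\pm\in\cF_0$ be its limit. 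A routine three-$\eps$ estimate, splitting $\Psi(0)=(\Psi(0)-\Psi_0^n)+\Psi_0^n$ and applying \eqref{unif} to the first term, then yields \eqref{pairing}; defining $r_\pm(t)$ as above gives \eqref{scat} and \eqref{rem0}.

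The main obstacle is the uniform estimate \eqref{unif}. Since $\cU_0(-t)$ is bounded on $\cF_0$, it reduces to $\sup_t\Vert\cP_c e^{-it\cH}\Vert_{\cL(\cF_0,\cF_0)}<\infty$. Unlike the free group, $e^{-it\cH}$ need not be uniformly bounded on all of $\cF_0$ --- the discrete modes of $\cH$ may grow --- so one must remain on the continuous subspace. There the conserved energy form of the Klein--Gordon equation is governed by a Schr\"odinger operator with essential spectrum $[m^2,\infty)$, which stays $\ge m^2$ on the continuous subspace precisely because \eqref{SC} excludes a threshold resonance or eigenvalue at $\om=\pm m$; hence that energy form is positive and equivalent to $\Vert\cdot\Vert_{\cF_0}^2$ on $\cP_c\cF_0$, and since $e^{-it\cH}$ preserves it, \eqref{unif} follows. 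Everything else is routine: the Cook bound \eqref{cook} is a one-line consequence of Theorem~\ref{main} and the hypothesis $\beta>3$, which is exactly what makes $\cV:\cF_{-\beta}\to\cF_0$ bounded.
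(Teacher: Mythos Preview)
Your argument is correct and is essentially the paper's own proof: both reduce to the continuous subspace, run Cook's method via the Duhamel identity, feed in the $t^{-3/2}$ decay of Theorem~\ref{main} to show the Cook integral converges absolutely in $\cF_0$ (giving part~ii) with the $|t|^{-1/2}$ tail), and then pass to general data in $\cF_0$ by density. The only difference is presentational: the paper restricts at the outset to $\Psi(0)\in\cX_c$ and writes the Duhamel formula for $\Psi(t)$ itself, whereas you keep $\cP_c$ explicit and differentiate $\cU_0(-t)\cP_c\Psi(t)$; and you spell out the uniform bound~\eqref{unif} that the density step actually needs, which the paper's one-line justification (``density plus unitarity of $\cU_0$'') leaves implicit. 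Your heuristic for~\eqref{unif} via coercivity of the conserved energy form on $\cX_c$ is the right mechanism, though note that the inequality $H+m^2\ge m^2$ on the continuous subspace comes simply from $\sigma_c(H)=[0,\infty)$ and does not itself require~\eqref{SC}.
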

\begin{proof} Denote ${\cal X}_d:=\cP_d\cF_0$,  ${\cal X}_c:=\cP_c\cF_0$.
For $\Psi(0)\in{\cal X}_d$ the asymptotics  (\ref{scat}) obviously hold 
with $\Phi_\pm=0$ and $r_\pm(t)=0$. 
Hence, it remains to prove for $\Psi(0)\in {\cal X}_c$ the asymptotics 
\be\la{scat1}
\Psi(t)=\cU_0(t)\Phi_\pm+r_\pm(t)
\ee
with the remainder satisfying (\re{rem0}). Moreover, it suffices to prove 
the asymptotics (\ref{scat1}), (\re{rem}) for $\Psi_0\in{\cal X}_c\cap \cF _\si$
with $\si>5/2$ since the space $\cF _\si$ is dense in $\cF_0$, while 
the group $\cU _0(t)$ is unitary in $\cF_0 $ after a suitable modification 
of the norm. In this case Theorem \re{main} implies the decay 
\be\label{fullp}
 \Vert \Psi(t)\Vert_{\cF_{-\si}}\le C (1+|t|)^{-3/2}\Vert \Psi(0)\Vert_ {\cF_{\si}},
\quad t\to\pm\infty
\ee
We also can assume $\beta\ge \si$.

The function $\Psi(t)$ satisfies the equation (\re{KGErr}),
$$ 
i\dot \Psi(t)=(\cH_0+\cV) \Psi(t)
$$
Hence, the corresponding Duhamel equation reads
\be\la{Dug}
   \Psi(t)= \cU _0(t)\Psi(0)+
   \int\limits_0^t \cU _0(t-\tau)\cV\Psi(\tau)d\tau, ~~~~t\in\R
\ee
Let us rewrite  \eqref{Dug} as
\be\la{Dug1}
   \Psi(t)=\cU _0(t)\Big[\Psi(0)+\int\limits_0^{\pm\infty}
   \cU _0(-\tau)\cV\Psi(\tau)d\tau\Big]
   -\int\limits_t^{\pm\infty} \cU _0(t-\tau)\cV\Psi(\tau)d\tau
    =\cU _0(t)\Phi_\pm+r_\pm(t)
\ee
It remains to prove that $\Phi_\pm\in\cF_0$ and (\ref{rem}) holds.
Let us consider the sign ``+'' for the concreteness.
The  ``unitarity'' of $\cU _0(t)$   in $\cF_0 $, the condition (\re{V})
and the decay (\ref{fullp}) imply that
\beqn\la{Dug2}
   \int\limits_0^{\infty}\Vert \cU _0(-\tau)\cV
   \Psi(\tau)\Vert_{\cF_0 }d\tau &\le& C\int\limits_0^{\infty}\Vert \cV
   \Psi(\tau)\Vert_{\cF_0 }d\tau\le C_2\int\limits_0^{\infty}\Vert
   \Psi(\tau)\Vert_{\cF _{-\si}}d\tau\\
   \nonumber
   &\le& C_2\int\limits_0^{\infty}(1+\tau)^{-3/2}\Vert \Psi(0)\Vert_{\cF _{\si}}d\tau
   <\infty
\eeqn
since $|V(x)|\le C' \langle x\rangle^{-\beta}\le C'' \langle x\rangle^{-\si}$.
Hence, $\Phi_+\in \cF_0$. The estimate (\re{rem}) follows similarly.
\end{proof}
\begin{remark}
{\it i)} The asymptotic completeness is proved  
by another methods in \ci{Lundberg,Schechter,Weder}
for more general Klein-Gordon equations with an external Maxwell field.
\\
{\it ii)} A version of Theorem \ref{sc} using standard $L^p$ spaces and Strichartz 
estimates,  follows also from \cite{Y95}. Notice that the hypotheses in \cite{Y95} 
can be relaxed to (\re{V}) by the methods of \cite{GS}.
\end{remark}
\appendix

\setcounter{equation}{0}

\protect\renewcommand{\thesection}{\Alph{section}}
\protect\renewcommand{\theequation}{\thesection.\arabic{equation}}
\protect\renewcommand{\thesubsection}{\thesection.\arabic{subsection}}
\protect\renewcommand{\thetheorem}{\Alph{section}.\arabic{theorem}}
\section{Appendix: Decay of the  free Schr\"odinger resolvent}
We revise the Agmon-Jensen-Kato decay of the resolvent
\cite[(A.2')]{A}, \cite[(8.1)]{jeka}
for special case of free Schr\"odinger equation in arbitrary dimension $n\ge 1$.

\begin{pro}\la{AJK}
For  $k=0,1,2,...$ and $\si>1/2+k$ the asymptotics hold 
\be\la{A}
  \Vert R_0^{(k)}(\zeta)\Vert_{\cL (H^s_\si,H^{s+l}_{-\si})}
  = \cO(|\zeta|^{-\fr{1-l+k}2}),\quad |\zeta|\to\infty,\quad
  \zeta\in\C\setminus[0,\infty),\quad s\in\R
\ee
where $l=-1,0,1,2$ for $k=0$, and $l=-1,0,1$ for $k=1,2,..$
\end{pro}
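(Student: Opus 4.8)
The plan is to put everything on the Fourier side and reduce to the two classical Agmon--Jensen--Kato cases. Since $R_0(\zeta)=(-\Delta-\zeta)^{-1}$ is multiplication by $(|\xi|^2-\zeta)^{-1}$, its $k$-th derivative $R_0^{(k)}(\zeta)=\partial_\zeta^k R_0(\zeta)=k!\,R_0(\zeta)^{k+1}$ is multiplication by $k!\,(|\xi|^2-\zeta)^{-k-1}$. Because $f\mapsto\langle x\rangle^\sigma\langle\nabla\rangle^s f$ is an isometry of $H^s_\sigma$ onto $L^2$ and $R_0^{(k)}(\zeta)$ commutes with the Fourier multiplier $\langle\nabla\rangle^s$, one has
\[
\Vert R_0^{(k)}(\zeta)\Vert_{\cL(H^s_\sigma,\,H^{s+l}_{-\sigma})}
=\bigl\Vert\langle x\rangle^{-\sigma}\langle\nabla\rangle^{l}R_0^{(k)}(\zeta)\langle x\rangle^{-\sigma}\bigr\Vert_{\cL(L^2,L^2)},
\]
which is independent of $s$; so it suffices to take $s=0$, i.e.\ to bound $\langle\nabla\rangle^{l}R_0^{(k)}(\zeta)$ from $H^0_\sigma$ to $H^0_{-\sigma}$.

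Next I would reduce the four admissible values of $l$ to $l\in\{0,1\}$ by elementary algebra. From $(1-\Delta)R_0(\zeta)=1+(1+\zeta)R_0(\zeta)$ one gets $\langle\nabla\rangle^{2}R_0(\zeta)=1+(1+\zeta)R_0(\zeta)$, so the case $(l,k)=(2,0)$ follows from $(l,k)=(0,0)$ at the cost of one power of $|\zeta|$. From $\langle\xi\rangle^{-2}=(|\xi|^2+1)^{-1}$ and the partial fraction identity $\bigl[(|\xi|^2+1)(|\xi|^2-\zeta)\bigr]^{-1}=(1+\zeta)^{-1}\bigl[(|\xi|^2-\zeta)^{-1}-(|\xi|^2+1)^{-1}\bigr]$ one gets
\[
\langle\nabla\rangle^{-1}R_0(\zeta)=\frac{1}{1+\zeta}\Bigl[\langle\nabla\rangle R_0(\zeta)-\langle\nabla\rangle^{-1}\Bigr],
\]
and Leibniz differentiation in $\zeta$, together with the $L^2$-boundedness of $\langle\nabla\rangle^{-1}$ and of the weights, reduces $l=-1$ (all $k$) to $l=1$ (all $k$): the dominant term of the Leibniz sum comes from the undifferentiated $(1+\zeta)^{-1}$ and produces $|\zeta|^{-1}\cdot\cO(|\zeta|^{-k/2})=\cO(|\zeta|^{-(1-(-1)+k)/2})$, with sharpest smoothness requirement $\sigma>1/2+k$.

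It then remains to prove, for $l\in\{0,1\}$ and $\sigma>1/2+k$, that $\Vert R_0^{(k)}(\zeta)\Vert_{\cL(H^0_\sigma,\,H^l_{-\sigma})}=\cO(|\zeta|^{-(1-l+k)/2})$ as $|\zeta|\to\infty$, $\zeta\in\C\setminus[0,\infty)$; for the free operator this is \cite[(A.2')]{A} and \cite[Theorem~9.2]{jeka}, which I would reprove directly. Cut the symbol $\langle\xi\rangle^{l}(|\xi|^2-\zeta)^{-k-1}$ smoothly into a piece supported where $\bigl|\,|\xi|^2-|\zeta|\,\bigr|\ge|\zeta|/2$ and a piece supported on the annulus $|\xi|^2\in(|\zeta|/2,2|\zeta|)$. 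On the first region $|\,|\xi|^2-\zeta\,|\gtrsim|\xi|^2+|\zeta|$ for every $\zeta\in\C\setminus[0,\infty)$, so the symbol is $\lesssim|\zeta|^{-(1-l+k)/2}$ in sup-norm; since the weights $\langle x\rangle^{-\sigma}\le 1$, the associated operator already obeys the desired bound and no weight is used there. On the annulus $\langle\xi\rangle^{l}\sim|\zeta|^{l/2}$, so this piece is $\cO(|\zeta|^{l/2})$ times a smooth cut-off of $(|\xi|^2-\zeta)^{-k-1}$, the symbol of $R_0^{(k)}(\zeta)/k!$. Passing to polar coordinates and invoking the Agmon trace lemma --- the restriction map $f\mapsto\widehat{f}|_{\{|\xi|=r\}}$ and its radial derivatives up to order $j$ are bounded $H^0_\sigma\to L^2(S^{n-1})$, uniformly in $r$, once $\sigma>1/2+j$ --- one reduces the annular piece to the one-dimensional model $(-\partial_r^2-\zeta)^{-k-1}$ on weighted $L^2(\R)$. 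Its kernel is the $k$-th $\zeta$-derivative of the one-dimensional free-resolvent kernel $\tfrac{e^{i\mu|r-r'|}}{2i\mu}$ (with $\zeta=\mu^2$, $\mathrm{Im}\,\mu>0$), a finite sum of terms $\mu^{-p}|r-r'|^{q}e^{i\mu|r-r'|}$ with $0\le q\le k$ and $p\ge k+1$; using $|r-r'|^{q}\le C\langle r\rangle^{q}\langle r'\rangle^{q}$, the operator $\langle r\rangle^{-\sigma}(\cdot)\langle r'\rangle^{-\sigma}$ becomes Hilbert--Schmidt precisely for $\sigma>1/2+k$, with Hilbert--Schmidt norm $\cO(|\mu|^{-(k+1)})=\cO(|\zeta|^{-(k+1)/2})$. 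Multiplying by the $|\zeta|^{l/2}$ prefactor gives $\cO(|\zeta|^{-(1-l+k)/2})$.

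The main obstacle is this near-sphere (glancing) analysis: one must make the restriction estimates and their radial derivatives uniform both as $|\zeta|\to\infty$ and as $\arg\zeta\to0$, so as to cover the limiting-absorption boundary values $R_0^{(k)}(\lambda\pm i0)$, and one must track the powers of $|\zeta|$ --- the sphere radius $|\zeta|^{1/2}$, the smooth factor $(\rho+\mu)^{-k-1}\sim|\zeta|^{-(k+1)/2}$ ($\rho=|\xi|$), the factor $\langle\xi\rangle^{l}\sim|\zeta|^{l/2}$, and the $r$-dependence of the trace norms --- so that they combine to exactly $|\zeta|^{-(1-l+k)/2}$. The uniformity as $\arg\zeta\to0$ rests on the fact that on the annulus the radial singularity at $\rho=\mu$ stays transverse to $\R_{\ge0}$ (its distance to the real axis is $\gtrsim|\mathrm{Im}\,\mu|$ for $\zeta$ off the axis, the limiting kernel being the standard one-dimensional limiting-absorption kernel), which is precisely what the trace lemma encodes in arbitrary dimension $n\ge1$.
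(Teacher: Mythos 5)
Your proposal is correct in outline, but it takes a genuinely different route from the paper's for its most substantial part, the derivatives $k\ge 1$. The preliminary reductions largely coincide: the reduction to $s=0$ by commutation with $\langle\nabla\rangle^s$ and the identity $(1-\Delta)R_0(\zeta)=1+(1+\zeta)R_0(\zeta)$ for $l=2$ are exactly Steps i) and iii) of the paper; for $l=-1$ you use a partial-fraction identity in $\langle\nabla\rangle^{-2}$ where the paper uses $R_0(\zeta)=-(1+\Delta R_0(\zeta))/\zeta$, but both work and yours conveniently covers all $k$ at once. The divergence is in the core estimate. The paper proves only $k=0$, $l=0,1$ directly, by combining the quoted Agmon inequality of Lemma \ref{Agm2} with the elementary Plancherel bound of Lemma \ref{Agm3}, and then obtains $k=1,2$ from the Lavine identity $\zeta R_0'=-R_0+\frac{1}{2}[x\cdot\nabla,R_0]$ and the double-commutator recurrence $2\zeta R_0^{(k)}=-(2k-3)R_0^{(k-1)}-\frac{1}{2}[x,[x,R_0^{(k-2)}]]$ (this is precisely why the $k=0$, $l=2$ case is needed there). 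You instead treat all $k$ uniformly by a glancing/non-glancing splitting of the multiplier, the Agmon trace lemma on the spheres $|\xi|=\rho\sim|\zeta|^{1/2}$, and an explicit weighted Hilbert--Schmidt computation for the one-dimensional kernel $\partial_\zeta^k\bigl(e^{i\mu|r-r'|}/2i\mu\bigr)$; your bookkeeping is right --- the factor $|\zeta|^{l/2}$ from $\langle\xi\rangle^l$ on the annulus times $|\zeta|^{-(k+1)/2}$ from the 1D kernel gives $|\zeta|^{-(1-l+k)/2}$, and the threshold $\sigma>1/2+k$ emerges transparently from the $|r-r'|^k$ growth of the differentiated kernel. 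What your route buys is self-containedness, a uniform treatment of all $k$, and an explanation of the weight threshold; what it costs is that the whole analytic burden is shifted onto the uniform-in-$\rho$ and uniform-as-$\arg\zeta\to 0$ restriction estimates that you defer, which are essentially the content of the unproved Lemma \ref{Agm2}, so you end up reproving Agmon's lemma rather than citing it. One point to tighten: the passage from the radial singular integral $\int\chi(\rho)(\rho-\mu)^{-k-1}h(\rho)\,d\rho$ (with $h(\rho)=\langle\widehat{f},\widehat{g}\rangle_{L^2(S^{n-1}_\rho)}$) to the physical-space 1D resolvent kernel is a Fourier-duality step, and to run it with the stated weights you need the H\"older refinement $h\in C^{k,\alpha}$ of the trace lemma, available exactly for $\sigma>1/2+k$; as written you switch between the two pictures without saying how the spatial weights $\langle x\rangle^{-\sigma}$ become the radial weights $\langle r\rangle^{-\sigma}$.
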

We give a complete proof of  the asymptotics (\re{A}) refining the arguments in
the proof of Theorem A.1 from \ci[Appendix A]{A}.
Namely, we deduce Proposition \re{AJK} from the following two lemmas.
The first lemma is well known (see \ci[Lemma A.2]{A}, 
and \ci[Lemma 4, p. 442]{RS}). Denote $\pa_j=\frac{\pa}{\pa x_j}$.
\begin{lemma}\la{Agm2}
  For $\sigma>1/2$, the following inequality holds for $\psi \in
  C_0^\infty(\R^n)$
  \be\la{A2}
    \Vert\pa_j \psi \Vert_{H^0_{-\si}}
    \le C(\si)\Vert (\Delta+\zeta)\psi \Vert_{H^0_\si},
    \quad\zeta\in\C
  \ee
\end{lemma}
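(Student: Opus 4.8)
The plan is to reduce the weighted estimate to an unweighted one via the Fourier transform, exploiting the fact that the symbol of $\Delta+\zeta$ is $-|\xi|^2+\zeta$, which vanishes (for real $\zeta>0$) only on the sphere $|\xi|^2=\zeta$, and that the multiplier $\xi_j/(-|\xi|^2+\zeta)$ is bounded away from that sphere. First I would observe that, since $\partial_j\psi$ has Fourier transform $i\xi_j\hat\psi(\xi)$, it suffices to control $\|\langle x\rangle^{-\sigma}\partial_j\psi\|_{L^2}$ in terms of $\|\langle x\rangle^{\sigma}(\Delta+\zeta)\psi\|_{L^2}$; writing $g=(\Delta+\zeta)\psi$, this is an estimate on the operator $\langle x\rangle^{-\sigma}\,\partial_j(\Delta+\zeta)^{-1}\,\langle x\rangle^{-\sigma}$ acting boundedly on $L^2$, uniformly in $\zeta\in\mathbb{C}$.

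The main step is the case of real $\zeta=\lambda>0$ near the singular sphere, which is the classical limiting-absorption territory. I would split the multiplier $m_\zeta(\xi)=\xi_j/(-|\xi|^2+\zeta)$ into a piece supported where $\big||\xi|^2-\mathrm{Re}\,\zeta\big|$ is large — on which $m_\zeta$ is a bounded symbol with bounded derivatives, so the corresponding operator is bounded on $L^2$ uniformly, and conjugation by $\langle x\rangle^{\pm\sigma}$ costs nothing — and a piece localized to a neighborhood of the sphere $|\xi|^2=\mathrm{Re}\,\zeta$. For the localized piece the singularity is of the form $1/(|\xi|^2-\zeta)$ across a smooth hypersurface, and the weighted $L^2\to L^2$ bound for such operators with $\sigma>1/2$ is exactly the trace-lemma / restriction-theorem input used in Agmon's original argument (\cite[Lemma A.2]{A}, \cite[Lemma 4, p.\ 442]{RS}): the weight $\langle x\rangle^{-\sigma}$ with $\sigma>1/2$ maps $L^2$ boundedly into $H^{-s}$-type spaces that restrict continuously to hypersurfaces, and this absorbs the one-dimensional singularity transverse to the sphere. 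Here the extra factor $\xi_j$ only helps, since on the relevant region $|\xi|$ is comparable to $\sqrt{\mathrm{Re}\,\zeta}$ and the numerator is harmless; for $\mathrm{Re}\,\zeta\le 0$, or $\mathrm{Re}\,\zeta$ small, there is no singular sphere at all and the symbol is globally bounded, giving the estimate immediately. One checks the constant is uniform in $\zeta$ by scaling $\xi\mapsto\sqrt{|\zeta|}\,\xi$ when $|\zeta|$ is large, which leaves the weighted estimate scale-invariant (the weight exponent being dimension-independent in the relevant Sobolev-trace inequality) — here I would be slightly careful that the $\langle x\rangle$ weights rescale to $\langle\sqrt{|\zeta|}\,x\rangle\ge\langle x\rangle$, so the rescaling can only improve the bound.

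I expect the main obstacle to be the uniformity of the constant $C(\sigma)$ over all $\zeta\in\mathbb{C}$, including the transition regime where $\zeta$ passes from the region with a singular sphere to the region without one, and the large-$|\zeta|$ scaling argument. The cleanest way to handle this is to treat three regimes — $|\zeta|\le 1$, $1\le|\zeta|$ with $\mathrm{Re}\,\zeta\le |\zeta|/2$ (so $|{-}|\xi|^2+\zeta|\gtrsim |\xi|^2+|\zeta|$ and $m_\zeta$ is a genuine bounded symbol), and $\mathrm{Re}\,\zeta\ge|\zeta|/2$ large (rescale to reduce to $\zeta$ of modulus one) — and to note that in each regime the estimate is either the trivial bounded-multiplier bound or a rescaled copy of the fixed Agmon trace lemma. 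Assembling these with a partition of unity in $\zeta$ gives \eqref{A2}. I would remark that approximation of general $\psi\in C_0^\infty$ is automatic since both sides are finite for such $\psi$, and density then extends the inequality to the natural domain if needed later.
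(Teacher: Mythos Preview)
The paper does not supply its own proof of this lemma; it is stated as well known with citations to Agmon \cite[Lemma A.2]{A} and Reed--Simon \cite[Lemma 4, p.~442]{RS}, and then used as a black box in the proof of Proposition~\ref{AJK}. Your outline---reduce to boundedness of the multiplier $\xi_j/(\zeta-|\xi|^2)$ between weighted $L^2$ spaces, split into a bounded-symbol piece and a piece localized near the sphere $|\xi|^2=\mathrm{Re}\,\zeta$, and invoke the trace/restriction lemma for the latter---is exactly the route taken in those references, so strategically you are on target.

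There is, however, one step that would not go through as written: the scaling argument for uniformity in large $|\zeta|$. Under $\xi\mapsto\sqrt{|\zeta|}\,\xi$ the dual spatial variable rescales as $x\mapsto x/\sqrt{|\zeta|}$, so the weight $\langle x\rangle^{-\sigma}$ becomes $\langle x/\sqrt{|\zeta|}\rangle^{-\sigma}$, which for $|\zeta|\ge 1$ is \emph{larger} (less localizing) than $\langle x\rangle^{-\sigma}$; your assertion that the weight rescales to $\langle\sqrt{|\zeta|}\,x\rangle\ge\langle x\rangle$ has the direction reversed. The factor $|\zeta|^{-1/2}$ gained by the multiplier does not compensate for both weights going the wrong way once $\sigma>1/2$, so the $|\tilde\zeta|=1$ case does not imply the large-$|\zeta|$ case by a naive rescaling of the final inequality. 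The uniformity is nonetheless true, but in the cited proofs it emerges from tracking constants through the trace-lemma step itself: the restriction of $\hat g\in H^\sigma$ to the sphere $|\xi|=\sqrt\lambda$, combined with the one-dimensional Hilbert-type bound in the transverse variable, produces a $\lambda$-independent constant because the singularity $1/(|\xi|-\sqrt\lambda)$ and the surface measure are jointly scale-free. If you want a self-contained argument, carry the $\lambda$-dependence through the trace estimate explicitly rather than attempting to rescale the finished weighted inequality.
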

Second lemma is a refinement, for special case of free Schr\"odinger equation,
of Lemma A.3 from \cite[Appendix A]{A} which is proved for bounded $|\zeta|$.
\begin{lemma}\la{Agm3}
  For any $\de\in\R$ and $\psi \in C_0^\infty(\R^n)$ the  estimate holds  
  \be\la{A3}
   \Vert \psi \Vert_{H^l_\de}\le C(s)|\zeta|^{-\frac{1-l}2}
    \Big(\Vert(\Delta+\zeta)\psi \Vert_{H^0_\de}
    +\sum\limits_{j=1}^{n}\Vert\pa_j \psi (x)\Vert_{H^0_\de}\Big),
    \quad \zeta\in\C,\quad |\zeta|\ge 1, \quad l=0,1
\ee
\end{lemma}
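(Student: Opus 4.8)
The plan is to reduce (\re{A3}) to the unweighted case $\de=0$ by a commutator argument, to prove that unweighted case by an elementary Fourier multiplier estimate, and to defer the bounded range $1\le|\zeta|\le M_0$ to the version of this estimate for bounded $|\zeta|$ established in \ci{A}. Throughout, $\psi\in C_0^\infty(\R^n)$ guarantees that every norm appearing below is finite and that all manipulations are legitimate.

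First I would settle the case $\de=0$, $l=0$. Put $f=(\De+\zeta)\psi$, so that $\hat\psi(\xi)=\hat f(\xi)/(\zeta-|\xi|^2)$, and split $\R^n=A\cup B$ with $A=\{|\xi|^2\le|\zeta|/2\}$. On $A$ one has $|\zeta-|\xi|^2|\ge|\zeta|-|\xi|^2\ge|\zeta|/2$, hence $|\hat\psi|\le 2|\hat f|/|\zeta|$, and the $A$-part of $\|\psi\|_{L^2}$ is $\le 2|\zeta|^{-1}\|f\|_{L^2}\le 2|\zeta|^{-1/2}\|f\|_{L^2}$ since $|\zeta|\ge1$. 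On $B$ one has $|\xi|\ge(|\zeta|/2)^{1/2}$, so, without ever inverting the multiplier, $|\hat\psi(\xi)|\le(2/|\zeta|)^{1/2}|\xi|\,|\hat\psi(\xi)|$, and the $B$-part is $\le(2/|\zeta|)^{1/2}\||\xi|\hat\psi\|_{L^2}\le C|\zeta|^{-1/2}\sum_j\|\pa_j\psi\|_{L^2}$. Adding the two contributions gives (\re{A3}) for $\de=0$, $l=0$; the case $\de=0$, $l=1$ then follows from the standard equivalence $\|\psi\|_{H^1}\sim\|\psi\|_{L^2}+\sum_j\|\pa_j\psi\|_{L^2}$ together with $|\zeta|^{-1/2}\le1$.

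For general $\de\in\R$ I would put $v=\langle x\rangle^\de\psi\in C_0^\infty(\R^n)$ and use the commutator identity $[\De,\langle x\rangle^\de]=2\na(\langle x\rangle^\de)\cdot\na+\De(\langle x\rangle^\de)$, with $|\na\langle x\rangle^\de|\le C\langle x\rangle^{\de-1}$ and $|\De\langle x\rangle^\de|\le C\langle x\rangle^{\de-2}$, and likewise $\pa_jv=\langle x\rangle^\de\pa_j\psi+(\pa_j\langle x\rangle^\de)\psi$; since $\langle x\rangle\ge1$, all the lower-order contributions are controlled by $\sum_j\|\pa_j\psi\|_{H^0_\de}$ and $\|\psi\|_{H^0_\de}$. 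Applying the previous step to $v$ then yields
$$
\|\psi\|_{H^0_\de}=\|v\|_{L^2}\le C|\zeta|^{-1/2}\Big(\|(\De+\zeta)\psi\|_{H^0_\de}+\sum_j\|\pa_j\psi\|_{H^0_\de}+\|\psi\|_{H^0_\de}\Big).
$$
The last summand is absorbed into the left-hand side as soon as $C|\zeta|^{-1/2}\le1/2$, i.e. for $|\zeta|\ge M_0=M_0(\de,n)$; this gives (\re{A3}) with $l=0$ on that range, and the case $l=1$ follows from $\|\psi\|_{H^1_\de}\sim\|\psi\|_{H^0_\de}+\sum_j\|\pa_j\psi\|_{H^0_\de}$ (the commutator of $\langle\na\rangle$ with $\langle x\rangle^\de$ being of lower order, hence absorbable). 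For the remaining compact range $1\le|\zeta|\le M_0$, the asserted bound---with $|\zeta|^{-(1-l)/2}$ replaced by a fixed constant, which is harmless since $|\zeta|^{-(1-l)/2}$ is bounded above and below there---is exactly the bounded-$|\zeta|$ lemma of \ci{A}. This completes the argument.

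The step I expect to be the main obstacle is the weighted reduction: the commutator $[\De,\langle x\rangle^\de]$ inevitably contributes a term of the form $\|\psi\|_{H^0_\de}$, of precisely the strength of the quantity being estimated, so it cannot be dominated but only absorbed---and absorption is legitimate only for $|\zeta|$ large, which is exactly why the bounded range must be handed over to the qualitative (compactness-type) estimate of \ci{A}. A related point, already in the unweighted step, is that the Fourier split must be arranged to lose only a factor $|\zeta|^{-1/2}$ and not the naive $|\zeta|^{-1}$: the extra term $\sum_j\|\pa_j\psi\|_{H^0_\de}$ on the right of (\re{A3}) is precisely what is needed on the frequency region $|\xi|^2\gtrsim|\zeta|$, where the multiplier $(\zeta-|\xi|^2)^{-1}$ can be arbitrarily large and $\De+\zeta$ by itself gives no control.
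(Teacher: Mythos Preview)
Your proof is correct and follows essentially the same route as the paper: the $\de=0$ case is handled on the Fourier side (the paper packages your low/high-frequency split into the single pointwise inequality $(1+|\xi|^l)^2\le C|\zeta|^{-(1-l)}\big(\big||\xi|^2-\zeta\big|^2+|\xi|^2\big)$ and integrates against $|\hat\psi|^2$), and the extension to general $\de$ is, in both cases, referred to Agmon \ci{A}. Your commutator-and-absorption step for large $|\zeta|$ just makes that reduction more explicit, but the substance is the same.
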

\begin{proof}
We  will prove (\ref{A3}) for $\de=0$, and the extension to all $\de\in\R$ follows 
by the arguments from \cite[pp 207-208]{A}.
For the proof we use the bound  (cf. \ci[formula (A.15')]{A})
\be\la{A4}
  (1+|\xi|^l)^2\le C|\zeta|^{-(1-l)} \Big(
  \Big||\xi|^2-\zeta\Big|^2+|\xi|^2\Big),
  \quad \xi\in\R^n,\quad |\zeta|\ge 1,\quad l=0,1
\ee
For  $l=1$ the bound is obvious. For $l=0$ it reduces to a quadratic inequality
for $y=|\xi|^2-|\zeta|\ge -|\zeta|$ since then
$$
 \Big||\xi|^2-\zeta\Big|^2+|\xi|^2 \ge \Big||\xi|^2-|\zeta|\Big|^2+|\xi|^2
 =y^2+y+|\zeta|\ge \min\limits_{y\ge -|\zeta|}(y^2+y)
 +|\zeta|\ge \fr{|\zeta|}2,~~~|\zeta|\ge 1
$$
Finally, let us multiply both sides of (\ref{A4}) by $|\hat \psi (\xi)|^2$ 
and integrate over $\R^n$. 
Then using Parseval's formula, we find for $|\zeta|\ge 1$ that
\be\la{A5}
 \sum\limits_{|\al|\le l}\Vert D^{\al}\psi \Vert^2\le C
 \int\limits_{\R^n}(1+|\xi|^l)^2|\hat \psi (\xi)|^2d\xi\le C_1|\zeta|^{-(1-l)}
 \Big(\Vert(\Delta+\zeta)\psi \Vert^2+\sum\limits_{j=1}^{n}\Vert
 {\pa_j}\psi (x)\Vert^2\Big)
\ee
\end{proof}
~\\
{\bf Proof of Proposition \ref{AJK}}
It suffices to verify the case  $s=0$ since $R_0(\zeta)$ commutes with
the operators $\langle \na\rangle^s$ with arbitrary $s\in\R$.\\
{\it Step i)}
First,  we prove (\re{A}) with $k=0$ and  $l=0,1$
similarly to the proof of Theorem A.1 in \ci[p. 208]{A}.
Applying Lemma \ref{Agm3} with $\de=-\si$, we obtain
\be\la{A10}
 \Vert \psi \Vert_{H^l_{-\si}}
 \le C(\si)|\zeta|^{-\fr{1-l}2}\Big(\Vert (\Delta+\zeta)\psi
\Vert_{H^0_{-\si}}
 +\sum\limits_{j=1}^{n}\Vert {\pa_j}\psi \Vert_{H^0_{-\si}}\Big),\quad
 |\zeta|\ge 1,\quad l=0,1.
\ee
for all $\psi \in H^2_{\si}(\R^n)$.
On the other hand, Lemma \ref{Agm2} implies that
\be\la{A11}
  \sum\limits_{j=1}^{n}\Vert {\pa_j}\psi \Vert_{H^0_{-\si}}\le C_1(\si)
  \Vert (\Delta+\zeta)\psi \Vert_{H^0_\si},\quad j=1,...,n.
\ee
Combining (\ref{A10}) and (\ref{A11}), we obtain
$$
  \Vert \psi \Vert_{H^l_{-\si}}
  \le C(\si)|\zeta|^{-\fr{1-l}2}\Big(\Vert (\Delta+\zeta)\psi\Vert_{H^0_{-\si}}
  +C_1(\si)\Vert (\Delta+\zeta)\psi \Vert_{H^0_\si}\Big)
  \le C_2(\si)|\zeta|^{-\fr{1-l}2}\Vert (\Delta+\zeta)\psi \Vert_{H^0_\si}
$$
and then (\ref{A}) with  $k=0$ and $l=0,1$  is proved.
\\
{\it Step ii)} Second, we prove (\re{A}) in the case  $k=0$ and $l=-1$.
We use the identity $R_0(\zeta)=-(1+\Delta R_0(\zeta))/\zeta$.
The bound  with $l=1$ implies that
$\Vert R_0(\zeta)\Vert_{\cL (H^0_\si,H^1_{-\si})}=\cO (1)$,
hence $\Vert \Delta R_0(\zeta)\Vert_{\cL (H^0_\si,H^{-1}_{-\si})}=\cO (1)$.
Therefore
$$
\Vert R_0(\zeta)\Vert_{\cL (H^0_\si,H^{-1}_{-\si})}
=\Vert (1+\Delta R_0(\zeta))/\zeta\Vert_{\cL(H^0_\si,H^{-1}_{-\si})}
=\cO(|\zeta|^{-1})
$$
{\it Step iii)} Third,  we prove (\re{A}) in the case  $k=0$ and  $l=2$.
Using the identity $(1-\Delta)R_0(\zeta)=1+(1+\zeta)R_0(\zeta)$,
we obtain
\beqn\nonumber
\Vert R_0(\zeta)\Vert_{\cL (H^0_\si,H^2_{-\si})}
&=&\Vert (1-\Delta)R_0(\zeta)\Vert_{\cL (H^0_\si,H^0_{-\si})}
=\Vert 1+(1+\zeta)R_0(\zeta)\Vert_{\cL (H^0_\si,H^0_{-\si})}\\
\la{002}
&=&1+\cO(|\zeta|)\Vert R_0(\zeta)\Vert_{\cL (H^0_\si,H^0_{-\si})}
=\cO(|\zeta|^{1/2}) 
\eeqn
{\it Step iv)}
Finally, we consider the case  $k\ge 1$.
The asymptotics (\ref{A}) wits $k=1$ follows  from  the asymptotics (\ref{A}) with
$k=0$  and the Lavine-type  identity \ci[(8.2)]{jeka}
\be\la{Lav}
\zeta R'_0(\zeta)=-R_0(\zeta)+\fr 12[x\cdot\nabla,R_0(\zeta)],
\quad\zeta\in\C\setminus[0,\infty)
\ee
(where $[\cdot,\cdot]$ stands for the commutator)
since
$$
x\in \cL (H^s_\si,H^s_{\si-1}),\quad\nabla\in \cL (H^s_\si,H^{s-1}_\si)
$$
For $k\ge 2$ the asymptotics (\ref{A}) follow by induction from the recurrent relation
\ci[(8.5)]{jeka}
$$
2\zeta R_0^{(k)}(\zeta)=-(2k-3) R_0^{(k-1)}(\zeta)
-\fr 12[x,[x,R_0^{(k-2)}(\zeta)]]
$$
where the double commutator is defined as
$$
[x,[x,R]]=|x|^2R-2\sum_j x_jRx_j+R|x|^2
$$
~~~~~~~~~~~~~~~~~~~~~~~~~~~~~~~~~~~~~~~~~~~~~~~~~~~~~~~~~~~~~~~~~~~~~~~~~
~~~~~~~~~~~~~~~~~~~~~~~~~~~~~~~~~~~~~~~~~~~~~~~~~$\Box$
\setcounter{equation}{0}
\section{Appendix: the Jensen-Kato lemma}
We prove a lemma concerning the decay of the Fourier integrals which we have used 
in (\re{Zy}). The lemma is  a special case of \cite[Lemma 10.2]{jeka}, 
and our proof is a streamlined version of the proof from \ci{jeka}.
Let ${\bf B}$ denote a Banach space with the norm $\Vert\cdot\Vert$, and $b>a$.
\begin{lemma}\label{jk}
Let $F\in C(a, b; {\bf B})$ satisfy
\be\la{Zc}
  F(a)=F(b)=0,~~~~ F''\in L^1(\de, b; {\bf B}),~~~\forall\de>0,~~~~
  \Vert F''(\om)\Vert=\cO (|\om-a|^{-3/2}),~~~\om\to a
\ee
Then
\be\la{Zyg}
  \int\limits_a^b e^{-it\omega}F(\omega)d\omega =\cO (t^{-3/2}),
  \quad t\to\infty
\ee
\end{lemma}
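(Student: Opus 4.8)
The plan is to estimate the Fourier integral by splitting the interval $(a,b)$ into a piece near the singular endpoint $a$, where $F''$ blows up like $|\om-a|^{-3/2}$, and the complementary piece where $F''$ is genuinely integrable, and to obtain $t^{-3/2}$ decay on each piece separately. Without loss of generality I would first translate so that $a=0$, writing $I(t)=\int_0^b e^{-it\om}F(\om)\,d\om$ and introducing the cutoff scale $\om\sim 1/t$: set $I(t)=I_1(t)+I_2(t)$ with $I_1$ the integral over $(0,1/t)$ and $I_2$ over $(1/t,b)$.

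For $I_1(t)$ I would not integrate by parts at all. Instead, use that $F(0)=0$ together with the behavior of $F''$ to control $F$ itself near $0$: integrating the bound $\Vert F''(\om)\Vert=\cO(\om^{-3/2})$ once gives $\Vert F'(\om)\Vert=\cO(\om^{-1/2})$ up to an additive constant, and a second integration using $F(0)=0$ gives $\Vert F(\om)\Vert = \cO(\om^{1/2})$ (the constant from the first integration contributes an $\cO(\om)$ term, which is even better). Hence on $(0,1/t)$,
\[
  \Vert I_1(t)\Vert \le \int_0^{1/t}\Vert F(\om)\Vert\,d\om
  \le C\int_0^{1/t}\om^{1/2}\,d\om = C'\,t^{-3/2}.
\]
For $I_2(t)$ I would integrate by parts twice. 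The boundary terms at $\om=b$ vanish because $F(b)=0$ and — from the second integration by parts — we also need $F'(b)$, which is finite; more carefully, one integration by parts gives a boundary term $\frac{1}{it}e^{-it\om}F(\om)$ that vanishes at $b$ and at $1/t$ contributes $\cO(t^{-1}\cdot t^{-1/2})=\cO(t^{-3/2})$ using $\Vert F(1/t)\Vert=\cO(t^{-1/2})$. A second integration by parts produces the boundary term $\frac{1}{(it)^2}e^{-it\om}F'(\om)$, which at $\om=1/t$ is $\cO(t^{-2}\cdot t^{1/2})=\cO(t^{-3/2})$ using $\Vert F'(1/t)\Vert=\cO(t^{1/2})$, and at $\om=b$ is $\cO(t^{-2})$; and the remaining integral $\frac{1}{(it)^2}\int_{1/t}^b e^{-it\om}F''(\om)\,d\om$ is bounded by $t^{-2}$ times $\int_{1/t}^b\Vert F''(\om)\Vert\,d\om$. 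That last integral splits as $\int_{1/t}^{\de_0}\cO(\om^{-3/2})\,d\om + \int_{\de_0}^b\Vert F''\Vert\,d\om$ for a fixed small $\de_0$; the first piece is $\cO(t^{1/2})$ and the second is a finite constant (by $F''\in L^1(\de,b)$), so the whole integral term is $\cO(t^{-2}\cdot t^{1/2})=\cO(t^{-3/2})$.

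Collecting the pieces gives $\Vert I(t)\Vert = \cO(t^{-3/2})$ as $t\to\infty$, which is (\re{Zyg}). The main obstacle — really the only subtle point — is bookkeeping the interplay between the cutoff scale $1/t$ and the endpoint singularity: one must choose the split point exactly at $\om\sim 1/t$ so that the "small $\om$" contribution $\int_0^{1/t}\om^{1/2}d\om$ and each boundary term from the integration by parts on $(1/t,b)$ all land at the common order $t^{-3/2}$; a coarser split (e.g.\ at a fixed $\de_0$) would lose the rate. Everything else is routine: the Banach-space-valued integration by parts is justified because $F\in C(a,b;{\bf B})$ with $F''\in L^1$ away from $a$, so $F'$ is absolutely continuous on $[\de,b]$ for every $\de>0$.
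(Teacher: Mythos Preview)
Your argument is correct. The splitting at $\om=1/t$, the bound $\Vert F(\om)\Vert=\cO(\om^{1/2})$ obtained by integrating $F''$ twice from a fixed interior point (using $F(0)=0$ and the integrability of $\om^{-1/2}$), the direct estimate on $(0,1/t)$, and the two integrations by parts on $(1/t,b)$ with the boundary bookkeeping you describe all go through; each piece indeed lands at $\cO(t^{-3/2})$.

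Your route, however, is genuinely different from the paper's. The paper extends $F$ by zero to all of $\R$, integrates by parts \emph{once} (globally, with no boundary terms since $F(a)=F(b)=0$) to reduce to $\frac{1}{it}\int F'(\om)e^{-it\om}\,d\om$, and then applies Zygmund's finite-difference trick
\[
\int F'(\om)e^{-it\om}\,d\om=-\tfrac12\int\bigl(F'(\om+\pi/t)-F'(\om)\bigr)e^{-it\om}\,d\om,
\]
bounding the $L^1$ norm of the difference by $\cO(t^{-1/2})$ via a split at $a+\pi/t$ and the mean-value estimate $\Vert F'(\om+\pi/t)-F'(\om)\Vert\le\int_\om^{\om+\pi/t}\Vert F''\Vert$. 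This avoids the second integration by parts and all the interior boundary terms you track; it is shorter and never requires isolating $F'(b)$. Your approach, by contrast, is more elementary (no trick to remember) and makes the role of the scale $1/t$ completely explicit; it also adapts transparently to other endpoint exponents $\Vert F''(\om)\Vert=\cO(|\om-a|^{-\alpha})$ by the same bookkeeping.
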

\begin{proof}
Extending $F$ by $F(\om)=0$ for $\om<a$ and for $\om>b$,
we obtain a continuous function $F$ on $(-\infty,\infty)$
with $F'\in L^1(-\infty,\infty; {\bf B})$.
Using Zygmund's trick \ci[formula (4.2) p. 45]{Zyg}, we obtain
$$
\int\limits_{-\infty}^{\infty}F'(\om)e^{-it\om}d\om
=-\frac 12 \int\limits_{-\infty}^{\infty}(F'(\om+\frac{\pi}t)-F'(\om))
e^{-it\om}d\om
$$
Furthermore, the conditions (\re{Zc}) imply that
\beqn\nonumber
&&\int\limits_{-\infty}^{\infty}\Vert F'(\om+\frac{\pi}t)-F'(\om)\Vert d\om
=\int\limits_{-\infty}^{a+\pi/t}...+ \int\limits_{a+\pi/t}^{\infty}...
\\
\nonumber
&&\le 2\!\!\int\limits_a^{a+2\pi/t}\Vert F'(\om)\Vert d\om
+\!\!\int\limits_{a+\pi/t}^{\infty}\!\!d\om\!\!\int\limits_{\om}^{\om+\pi/t}
\Vert F''(\nu)\Vert d\nu=\cO (t^{-1/2})
+\frac{\pi}t\!\!\int\limits_{a+\pi/t}^{\infty}\Vert F''(\nu)\Vert d\nu
=\cO (t^{-1/2})
\eeqn
Hence, (\re{Zyg}) follows.
\end{proof}

\end{document}